

\documentclass[12pt,a4paper]{amsart}

\usepackage{amscd}
\usepackage{graphicx}
\usepackage{amssymb,amsfonts,latexsym}

\input xy
\xyoption {all}

\usepackage{geometry}
\setlength{\topmargin}{0mm} \setlength{\oddsidemargin}{-1mm}
\setlength{\evensidemargin}{-1mm} \setlength{\textwidth}{6.2in}
\setlength{\textheight}{9.1in}


\vfuzz2pt 

\hfuzz2pt 


\newtheorem{thm}{Theorem}[section]

\newtheorem{cor}[thm]{Corollary}

\newtheorem{lem}[thm]{Lemma}

\newtheorem{prop}[thm]{Proposition}

\newtheorem{prob}[thm]{Problem}

\theoremstyle{definition}

\newtheorem{conj}[thm]{Conjecture}

\newtheorem{cri}[thm]{Criterion}
\newtheorem{defn}[thm]{Definition}

\newtheorem{exam}[thm]{Example}

\theoremstyle{remark}

\newtheorem{rem}[thm]{Remark}

\numberwithin{equation}{section}


\def\divides{{\,|\,}}

\def\ndivides{{\not|\,}}

\def\divides{{\,|\,}}

\def\ndivides{{\not|\,}}
\newcommand\card[1]{{\left|{#1}\right|}}

\newcommand{\mQ}{\mathbb Q}
\newcommand{\G}{\mathcal{G}}
\newcommand{\mH}{\mathcal{H}}

\newcommand{\ra}{\rightarrow}

\newcommand\lcm{{\operatorname{lcm}}}

\newcommand{\mG}{\mathfrak{G}}

\DeclareMathOperator\Gal{Gal}

\DeclareMathOperator\Lim{lim}


\begin{document}

\title[1]{Admissibility and field relations}%

\def\Tech{Department of Mathematics, Technion-Israel Institute of Technology, Haifa 32000, Israel}

\author{Danny Neftin}
\address{\Tech}
\email{neftind@tx.technion.ac.il}%

\begin{abstract}

Let $K$ be a number field. A finite group $G$ is called $K$-admissible if there exists a $G$-crossed product $K$-division algebra. $K$-admissibility has a necessary condition called $K$-preadmissibility that is known to be sufficient in many cases. It is a 20 year old open problem to determine whether two number fields $K$ and $L$ with different degrees over $\mQ$ can have the same admissible groups. We construct infinitely many pairs of number fields $(K,L)$ such that $K$ is a proper subfield of $L$ and $K$ and $L$ have the same preadmissible groups. This provides evidence for a negative answer to the problem. In particular, it follows from the construction that $K$ and $L$ have the same odd order admissible groups.
\end{abstract}

\maketitle

\section{introduction}

Equivalence relations between number fields are often used to determine the extent to which certain arithmetic properties of a field determine the field.
One example  is {\it arithmetic equivalence} (see \cite{Per} or \cite[Chap. III]{Kl}),
under which two number fields $K$ and $L$ are equivalent if they have the same Dedekind zeta function. Two arithmetically equivalent fields have the same $\mQ$-normal closure, degree over $\mQ$, inertia degrees of rational primes and a long list of other properties (see \cite[Chap. III, Theorem 1.4]{Kl}). In particular if $L/\mQ$ is Galois there is no number field $K$ different from $L$ that is arithmetically equivalent to $L$.

In \cite{Neu4}, Neukirch proved that if two number fields $K$ and $L$ have isomorphic absolute Galois groups
then they have the same $\mQ$-normal closure and asked if necessarily $K\cong L$. This was proved
independently by Ikeda, Iwasawa and Uchida (see \cite{Uch}).
In other words, the absolute Galois group of a number field determines it.

In \cite{Son}, Sonn asked an analogous question for crossed product division algebras and admissibility.
A finite group $G$ is {\it $K$-admissible} (here $K$ can be an arbitrary field) if there exists a $K$-central division algebra $D$ with maximal subfield $L$ such that $L/K$ is Galois with $\Gal(L/K)\cong G$. In such a case $D$ has the structure of a $G$-crossed product division algebra (see \cite[Chap. 4, \S 4]{Her}).

Two number fields $K$ and $L$ are {\it equivalent by admissibility}
if $K$ and $L$ have the same admissible groups.
In \cite{Son},  Sonn showed  that number fields which are equivalent by admissibility have the same $\mQ$-normal closure.
It is unknown whether two number fields which are equivalent by admissibility are necessarily isomorphic. In fact, even the following problem is open (see \cite{Son},\cite{Son1}):
\begin{prob}\label{degree.prob}Let $K$ and $L$ be two number fields that are equivalent by admissibility. Do $K$ and $L$ necessarily have the same degree over $\mQ$?
\end{prob}

So far, this problem was found to have an affirmative answer in several cases. Most notably, in \cite[Theorem 5]{Loc} Lochter showed that if in addition $[K:\mQ]$ is a prime or $[K:\mQ]=4$ then necessarily $[K:\mQ]=[L:\mQ]$.

In this paper we construct (infinitely many) examples of number fields $L$, Galois over $\mQ$, and proper subfields $K\subset L$ that are equivalent with respect to a property called preadmissibility (defined below) which is closely related to admissibility. In particular, we show that these number fields $K$ and $L$  have the same odd order admissible groups. The often indistinguishable behavior of admissibility and preadmissibility and these constructions lead us to conjecture
that Problem \ref{degree.prob} has a negative answer in the following form:
\begin{conj}\label{main.conj} There exists a number field $L$ that has a proper subfield $K$ such that $K$ and $L$ have the same admissible groups.
\end{conj}

Let us recall the definition of preadmissibility and its origin. In \cite{Sch}, Schacher gave the following realization criterion for admissibility over number fields:
\begin{cri}(Schacher)\label{schacher.cri} Let $G$ be a finite group and $K$ a number field. Then $G$ is $K$-admissible if and only if there exists a $G$-extension $L/K$ such that for every rational prime $p\divides |G|$, there are two primes $v_1 ,v_2$ of $K$ for which the decomposition group of $L/K$ at a prime over $v_i$ 
 contains a $p$-Sylow subgroup of $G$, for $i=1,2$.\end{cri}

Following Schacher's criterion, one can extract necessary local realization conditions for $K$-admissibility, namely:
\begin{defn} Let $K$ be a number field. A finite group $G$ is \emph{$K$-preadmissible} if for every $p\divides\card{G}$ there are two primes $v_1(p),v_2(p)$ and corresponding subgroups $G^{v_1(p)},G^{v_2(p)}\leq G$ such that $G^{v_i(p)}$ contains a $p$-Sylow subgroup of $G$ and is realizable over $K_{v_i(p)}$ for $i=1,2$, and the primes $v_i(p)$, $i=1,2$, $p\divides \card{G},$ are distinct\footnote{The condition on the primes to be distinct can be relaxed as in \cite{U}}. \end{defn}

It often happens that preadmissible groups are also admissible. For example, a theorem of Neukirch (see \cite[Corollary 2]{Neu}) implies that any group whose order is prime to the number of roots of unity in $K$ is $K$-preadmissible if and only if it is $K$-admissible.
The $\mQ$-preadmissible groups are those with metacyclic Sylow subgroups (see \cite{Nef}). It is known that many groups with metacyclic Sylow subgroups, including all such solvable groups, are $\mQ$-admissible (see e.g. \cite{AS},\cite{FS},\cite{Fei},\cite{FV},\cite{Son}). However there are such groups for which this remains unknown (see \cite[Theorem 2.3]{DS}).

We shall say that two number fields $K$ and $L$ are {\it equivalent by preadmissibility} if they have the same preadmissible groups.
Even though equivalence by admissibility is a complete mystery, in many cases equivalence by preadmissibility is within reach and allows one to study equivalence by admissibility with respect to various families of groups. Sonn's proof of \cite[Theorem 1]{Son} can be adapted to show that number fields that are equivalent by preadmissibility must have the same $\mQ$-normal closure (see Proposition \ref{pread-normal-closure.rem}). In particular $\mQ$ is equivalent by preadmissibility only to itself.
However, there are number fields $L$ that are Galois over $\mQ$ and have proper subfields $K$ that are preadmissibly equivalent to $L$ (see Corollary \ref{examples.cor}).
We use the following theorem to reduce this assertion to a group theoretical statement on split double cosets. For two subgroups $A,B$ of a finite group $\G$, a double coset $AxB$ is called {\it split} if $|AxB|=|A||B|$.

\begin{thm}\label{examples_characterization.thm}
Let $l$ be a prime and $\G$ an $l$-group. Let $L/\mQ$ be a $\G$-extension in which $l$ splits completely. Let $K$ be a subfield of $L$ and $\mH=\Gal(L/K)$. If for every subgroup $D\leq \G$ that appears as a decomposition group 
there are two split double cosets of the form $Dx\mH$, $x\in \G$, then $K$ and $L$ are equivalent by preadmissibility.  If $\G$ is non-metacyclic the converse also holds.
\end{thm}

Note that the requirement on $l$ to split completely is satisfied in many known realizations of $l$-groups including those of Scholz-Reichard (see \cite{Sco}) and Shafarevich (see \cite{Sha}).
By observing that extensions $L/\mQ$ as in Theorem \ref{examples_characterization.thm} are tamely ramified and hence have metacyclic decomposition groups and using the realizations of \cite{Sco}, \cite{Sha} and Neukirch's Theorem (see \cite{Neu}),
we obtain  the following group theoretic criterion:
\begin{cor}\label{group_criterion.cor} Let $l$ be a prime. Let $\G$ be an $l$-group and $\mH\leq \G$ a subgroup such that for every metacyclic subgroup $D\leq \G$ there are two split double cosets of the form $Dx\mH$. Then there is a $\G$-extension $L/\mQ$ such that $L$ and $K:=L^{\mH}$ are equivalent by preadmissibility and have the same odd order admissible groups. \end{cor}

The proofs of Theorem \ref{examples_characterization.thm} and Corollary \ref{group_criterion.cor} are given in Section \ref{equ_subfield.sec}. In Section \ref{seq.sec}, we give simple group theoretic
conditions for the construction of infinitely many pairs $(\G,\mH)$ as in Corollary \ref{group_criterion.cor}.
In Section \ref{sylow.sec}, we use the conditions from Section
\ref{seq.sec} to show that in Corollary \ref{group_criterion.cor}, $\G$ can be chosen to be an $l$-Sylow subgroup of the symmetric group $S_{l^n}$ and $\mH\leq \G$ a cyclic subgroup of order $l$, for any $n\geq 3$ and prime $l$. In particular Corollary \ref{group_criterion.cor} yields examples of preadmissibly equivalent fields $K, L$ with $[L:\mQ]>[K:\mQ]$.

For further insight into equivalence by admissibility, in Section \ref{art.sec} we compare the preadmissibility equivalence to other relations. Namely, we compare it to
arithmetic equivalence and {\it local isomorphism}, under which two number fields $K$ and $L$ are equivalent if they have isomorphic Adele rings (see \cite[Chap. VI, \S 2]{Kl}). We discuss the following implications diagram:
\begin{equation}\label{section1.1 - implications Diagram}
\xymatrix@R=10pt@C=3pt{
    & \mbox{1 - Isomorphism} \ar@{->}[d]  &
\\
    & \mbox{2 - Local isomorphism} \ar@{->}[ld]\ar@{->}[rd] &
\\
    \mbox{3 - Arithmetic equivalence \ar@{->}[dr]} &  & \mbox{4- Preadmissibility equivalence} \ar@{->}[dl]
\\
    &  \mbox{5 - Same $\mQ$-normal closure}  &
}
\end{equation}
and prove that every other implication that holds is a composition of these implications.

Note that an example for the non-implication $2\not\ra 1$ appears
in \cite{Kom2} and a different approach that yields a rich source
of examples is given in Example \ref{2 not implies 1.exam}. The
non-implication $3\not\ra 2$ is known by \cite{Kom1}.
It is unknown how equivalence by admissibility fits into Diagram (\ref{section1.1 - implications Diagram}) and whether it implies or is implied by the preadmissibility equivalence.

\subsection*{Acknowledgements}The paper is based on a  work of the author throughout his M.Sc. research under the supervision of Jack Sonn. The author would like to thank Prof. Sonn for reading several drafts of this paper, suggesting ways to improve it and for introducing the subject and the connections between prime decompositions and double cosets
to the author. The author would like to thank Lior Bary-Soroker
for his advice on the organization of this paper and for many
useful comments. The author would also like to thank the referee
for his/her helpful remarks.

\section{Equivalence by preadmissibility}

\subsection{Primes and double cosets}\label{basic.sec}

To understand equivalence by preadmissibility it is necessary to first understand how equivalent fields lie inside their $\mQ$-normal closure. For this, we first recall a well known connection between prime decompositions in subfields of Galois extensions and double cosets.

Let $\G$ be a finite group, $M/\mQ$ a $\G$-extension, $K$ a subfield of $M$ and $\mH:=\Gal(M/K)$. Let $p$ be a rational prime and let $v_1,...,v_k$ be the primes of $K$ lying above it. Assume the primes $v_1,...,v_k$ are ordered such that $[K_{v_i}:\mQ_p]\geq [K_{v_j}:\mQ_p]$ for $i\geq j$. We shall call the vector $([K_{v_1}:\mQ_p],...,[K_{v_k}:\mQ_p])$ the {\it local degree type} of $p$ in $K$.

The parallel notion in group theory is the double coset type. Let $Dx_1\mH,...,Dx_s\mH$ be the double cosets of $D\leq \G$ and $\mH$ in $\G$, ordered by decreasing cardinality: $|Dx_1\mH|\geq ...\geq |Dx_s\mH|$. We call the vector $(|Dx_1\mH|, ...,|Dx_s\mH|)$ the {\it double coset type} $(D,\mH)$. Denote by $(D,\mH)_k$ the $k$-th entry of the vector $(D,\mH)$.

Let $f(x)\in \mQ[x]$ be an irreducible polynomial, a root $\alpha$ of which generates $K/\mQ$. Then $f$ splits over $M$. Let $\alpha_1:=\alpha,\alpha_2,...,\alpha_r$ be the roots of $f$ in $M$.  Then $\G$ acts transitively on the set $R_f:=\{\alpha_1,...,\alpha_r\}$. This action is equivalent to the action of $\G$ on the set of left cosets $\G/\mH$. Let $w_1$ be a prime of $M$ dividing $v_1$ and $D:=D(M/\mQ,w_1)\leq \G$ the decomposition group of $w_1$ in $M/\mQ$. Then $D$ acts on the set $R_f$ which breaks into orbits under this action. Denote these orbits by $O_1,...,O_s$.
These orbits correspond to the orbits of the action of $D$ (as a subgroup of $\G$) on $\G/\mH$ which are $\{dx_i\mH|d\in D\}$, $i=1,...,s$.
Furthermore, the cardinality of $O_i$ equals the number of elements in the orbit of $x_i\mH$ which is $\frac{|Dx_i\mH|}{|\mH|}$, for any $i=1,...,s$.

The group $D$ is isomorphic to the Galois group of $M_{w_1}/\mQ_p$ which is a splitting field of $f$ over $\mQ_p$ with an isomorphism that preserves the action on $R_f$. Thus, $f$ factors over $\mQ_p$ into  $f(x)=f_1(x)...f_s(x)$ where $f_i$ is irreducible over $\mQ_p$ and the roots of $f_i$ are the elements of the orbit $O_i$. In particular $k=s$ and for all $i=1,\ldots,s,$ \begin{equation} \label{degrees equality}[K_{v_i}:\mQ_p]=\deg(f_i(x))=|O_i|=\frac{|Dx_i\mH|}{|\mH|}.\end{equation}

We arrive to the following well known description of prime decomposition in $K$:

\begin{lem}\label{double_coset_prime_splt_cor.lem}
Let $\G$ be a finite group and $\mH$ a subgroup of $\G$. Let $M$ be a $\G$-extension of $\mQ$ and $K=M^\mH$. Let $p$ be a rational prime and $v$ a prime of $M$ dividing $p$ with decomposition group $D:=D(M/\mQ,v)\leq \G$. Then the local degree type of $p$ in $K$ equals $\frac{1}{|\mH|}(D,\mH)$.
\end{lem}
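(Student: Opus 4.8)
The plan is to formalize the discussion immediately preceding the statement, which is essentially the classical description of how a rational prime splits in a subfield $K=M^{\mH}$ of a Galois extension $M/\mQ$, read off from the decomposition groups in $M$.

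First I would fix a primitive element: write $K=\mQ(\alpha)$ with $\alpha$ an algebraic integer having minimal polynomial $f(x)\in\mQ[x]$, so that $M$, being the $\mQ$-normal closure of $K$, is the splitting field of $f$. Identifying the root $\alpha$ with the trivial coset $\mH\in\G/\mH$ produces a $\G$-equivariant bijection between the root set $R_f$ of $f$ and $\G/\mH$. Next, fix a prime divisor $\fp$ of $p$ in $M$ and put $D=D(M/\mQ,\fp)$. The completion $M_\fp/\mQ_p$ is Galois with group (isomorphic to) $D$, via an isomorphism compatible with the action on $R_f$; hence the factorization $f=f_1\cdots f_s$ of $f$ over $\mQ_p$ into monic irreducibles corresponds exactly to the decomposition of $R_f$ into $D$-orbits $O_1,\dots,O_s$ with $\deg f_i=|O_i|$. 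Transporting along $R_f\leftrightarrow\G/\mH$, these orbits are the images in $\G/\mH$ of the double cosets $Dx_1\mH,\dots,Dx_s\mH$, and $|O_i|=|Dx_i\mH|/|\mH|$. Finally, the primes $v_1,\dots,v_k$ of $K$ above $p$ biject with the irreducible factors $f_i$, with $K_{v_i}\cong\mQ_p[x]/(f_i)$ and therefore $[K_{v_i}:\mQ_p]=\deg f_i=|O_i|=|Dx_i\mH|/|\mH|$. Sorting both sides into non-increasing order gives the asserted equality of the local degree type of $p$ in $K$ with $\tfrac{1}{|\mH|}(D,\mH)$.

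The remaining point is independence of the auxiliary choices. A different prime divisor $\fp'$ of $p$ in $M$ replaces $D$ by a $\G$-conjugate $gDg^{-1}$, and since $gDx\mH=(gDg^{-1})(gx)\mH$, this only relabels the double cosets and leaves the multiset of their cardinalities — hence the double coset type $(D,\mH)$ — unchanged; likewise a different primitive element $\alpha$ changes $f$ but not the $\G$-set $\G/\mH$, hence not the orbit data. I expect no genuine obstacle here: the whole content is the standard translation ``splitting of $p$ in $M^{\mH}$ $\leftrightarrow$ orbits of a decomposition subgroup on $\G/\mH$ $\leftrightarrow$ $D$-$\mH$ double cosets,'' so the only care required is the bookkeeping of the ordering and of the normalizing factor $1/|\mH|$.
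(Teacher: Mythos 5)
Your argument is correct and follows essentially the same route as the paper: the paper's proof is exactly the discussion preceding the lemma, translating the factorization of the minimal polynomial of a primitive element over $\mQ_p$ into the orbits of the decomposition group $D$ on $\G/\mH$, i.e.\ the double cosets $Dx_i\mH$, with $[K_{v_i}:\mQ_p]=|Dx_i\mH|/|\mH|$. Your additional remark on independence of the choice of $\fp$ (conjugating $D$ only relabels the double cosets) matches the paper's note immediately after the lemma.
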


Note that for a different prime $v'$ of $M$ dividing $p$, the decomposition group $D'=D(M/\mQ,v')$ is a conjugate of $D$ and hence $D$ and $D'$ have the same double coset type $(D',\mH)=(D,\mH)$.

Since for any two subgroups $A,B\leq \G$ we have $|AB|=\frac{|A||B|}{|A\cap B|}$, (\ref{degrees equality}) equals:
\begin{equation}\label{deg2.equ}\frac{|Dx_i\mH|}{|\mH|}=\frac{|x_i^{-1}Dx_i\mH|}{|\mH|}=\frac{|x_i^{-1}Dx_i||\mH|}{|x_i^{-1}Dx_i\cap \mH||\mH|}
=\frac{|D|}{|x_i^{-1}Dx_i\cap \mH|}.\end{equation} We shall use (\ref{degrees equality}) and (\ref{deg2.equ}) repeatedly throughout the text.

\subsection{The $\mQ$-normal closure}\label{normalclosure.subsection}
Similarly to \cite{Son} one has:
\begin{prop}\label{pread-normal-closure.rem} Two number fields that are equivalent by preadmissibility have the same $\mQ$-normal closure. \end{prop}
A proof for Proposition \ref{pread-normal-closure.rem}  can be obtained by adjusting the proof of \cite[Theorem~1]{Son} to preadmissibly equivalent fields and using a weak version of the Grunwald-Wang theorem for odd order groups (see \cite{Wan} or \cite[Chap. IX, \S 2, Theorem 9.2.8]{NSW}):
\begin{thm}\emph{(Grunwald-Wang)}\label{GW.thm} Let $A$ be an abelian group of odd order. Let $K$ be a number field and $S$ a finite set of primes of $K$. For every prime $v\in S$ fix an extension $L^{(v)}/K_v$ whose Galois group is isomorphic to a subgroup of $A$. 
Then there is an $A$-extension $L/K$ for which $L_v=L^{(v)}$ for all $v\in S$.
\end{thm}
In particular it follows from Criterion \ref{schacher.cri} that an odd order abelian group $A$ is $K$-admissible if and only if $A$ is $K$-preadmissible.

The proof of \cite[Theorem 1]{Son} can be adjusted as follows: in all cases choose $p$ to be an odd prime, in Cases 1 and 2.1 one may pick $B$ to be $C_p\wr C_p$  and  in Case 2.2 pick $A$ to be $(C_p)^3$. Letting $F$ be any of the fields $K$ and $L$ in \cite[Theorem 1]{Son}, one observes that both $A$ and $B$ are $F$-admissible if and only if they are $F$-preadmissible. Indeed by Theorem \ref{GW.thm}, $A$ is $F$-admissible if and only if it is $F$-preadmissible over any number field $F$ and by \cite[Theorems 3.3 and 5.10(b)]{Sal} $B$ admits the same property. Therefore making this choice of $A$ and $B$ separates $K$ and $L$ by preadmissibility in the same way \cite[Theorem 1]{Son} separates $K$ and $L$ by admissibility.

Note that having the same $\mQ$-normal closure is considered a weak arithmetic equivalence and by \cite[Chap. II, Corollary 1.6.b]{Kl} is characterized by the set of rational primes that split completely.

The following proposition presents further properties of preadmissibly equivalent fields.
We shall say a prime $p$ decomposes in a number field $F$ if there are at least two primes of $F$ that divide $p$.
\begin{prop}\label{preadmissibility_implications.prop}
Let $K$ and $L$ be two number fields that have the same $\mQ$-normal closure $M$ and $\mH=\Gal(M/K),\mH'=\Gal(M/L)$. Let $p$ be a rational prime, $v$ a prime of $M$ dividing $p$ and $D=D(M/\mQ,v)$. If $K$ and $L$ are equivalent by preadmissibility then:

\emph{(1)} $p$ decomposes in $K$ if and only if $p$ decomposes in $L$,

\emph{(2)}
if $p$ decomposes in $K$ and $L$ then $\frac{(D,\mH)_2}{|\mH|}=\frac{(D,\mH')_2}{|\mH'|}$.
\end{prop}
The idea behind the proof of Proposition \ref{preadmissibility_implications.prop} lies in \cite[Theorems 1 and 2]{Son}. A similar proposition, with the assumptions that $K$ and $L$ are equivalent by admissibility and $p$ is odd, appears in \cite{Loc}  without proof. Our proof uses the following lemma which is part of \cite[Theorem 28]{Lid2}:
\begin{lem}\label{preadmissibility_implies_metacyclic}
Let $K$ be a number field and $p$ a rational prime that does not decompose in $K$. If $G$ is a $K$-preadmissible group then the $p$-Sylow subgroups of $G$ are metacyclic.
\end{lem}
\begin{rem}\label{Lid_observation}In \cite{Lid2}, $G$ is assumed to be $K$-admissible but the proof only uses preadmissibility (see also \cite{Nef}).
What is actually proved in \cite{Lid2} is the following observation: If $G$ is $K$-preadmissible with respect to a set of primes $\{v_i(p)|i=1,2,p\divides |G|\}$ and $p_0\divides |G|$ is a prime for which $v_i(p_0)$ does not divide $p_0$ for some $i=1,2$, then any $p_0$-Sylow subgroup of $G$ is metacyclic.
\end{rem}
\begin{proof}[Proof of Proposition \ref{preadmissibility_implications.prop}]
(1) Assume on the contrary $p$ decomposes in $L$ but not in $K$ and let
$v_1,v_2$ be two primes of $L$ that divide $p$. Then by Lemma \ref{preadmissibility_implies_metacyclic} any $K$-preadmissible $p$-group is metacyclic. Let $G=C_p\wr C_p$ for odd $p$ and $G=C_2\wr C_4$ for $p=2$. Then $G$ is not metacyclic and hence not $K$-preadmissible. We shall show that $G$ is $L$-preadmissible which leads to a contradiction.

For this, it suffices to prove that $G$ is realizable over $L_{v_1}$ and $L_{v_2}$. Let $k$ be a $p$-adic field, $M_p(k)$ the maximal pro-$p$ extension of $k$ and denote by $\mG_k$ the Galois group $\Gal(M_p(k)/k)$. For odd $p$, $\mG_{\mQ_p}$ is the free pro-$p$ group on two generators (see \cite[\S 2.5.6]{Ser2}). By the Nielsen-Schreier theorem for pro-$p$ groups (see \cite{BNW}) the subgroup $$\Gal(M_p(\mQ_p)k/k)\cong \Gal(M_p(\mQ_p)/M_p(\mQ_p)\cap k)\leq \mG_{\mQ_p}$$ is a free pro-$p$ group of rank $[\mG_{\mQ_p}:\Gal(M_p(\mQ_p)/M_p(\mQ_p)\cap k)]+1\geq 2$  and hence $G$ is a quotient of it. Thus, for odd $p$, $G$ is realizable over $k=L_{v_1},L_{v_2}$.

Now let $p=2$ and $k$ be a $2$-adic field of degree $n=[k:\mQ_2]$.
If $n$ is even then $\mG_k$ has one of the following presentations of pro-$2$ groups (see e.g. \cite[\S 2.5.6]{Ser2}):
\begin{equation}\label{first_presentation.equ}
\langle x_1,\ldots,x_{n+2} \mid x_1^{q}[x_1,x_2] \cdots [x_{n+1},x_{n+2}]=1 \rangle, 
\end{equation}
\begin{equation}\label{f_1_presentation.equ} \langle x_1,\ldots,x_{n+2} \mid x_1^{2}[x_1,x_2]x_3^{2^f}[x_3,x_4] \cdots [x_{n+1},x_{n+2}]=1 \rangle, \end{equation}
\begin{equation}\label{f_2_presentation.equ} \langle x_1,\ldots,x_{n+2} \mid x_1^{2+2^f}[x_1,x_2] \cdots [x_{n+1},x_{n+2}]=1 \rangle. \end{equation}
where $q\geq 2$ is the number of $2$-power roots of unity in $k$ and $f\geq 2$ an integer. For $n$ odd, $\mG_k$ has the presentation:
\begin{equation}\label{last_presentation.equ} \langle x_1,\ldots,x_{n+2} \mid x_1^2 x_2^4 [x_2,x_3] \cdots [x_{n+1},x_{n+2}]=1 \rangle . \end{equation}
Let $\langle a\rangle =C_2, \langle b\rangle = C_4$. From each of the presentations (\ref{first_presentation.equ})-(\ref{last_presentation.equ}), there is an epimorphism from $\mG_k$ to $G=\langle a \rangle \wr \langle b\rangle= \langle a\rangle^{4}\rtimes \langle b\rangle$ simply by sending $x_1$  to one of the conjugates of $a$, $x_2\ra 1,x_3\ra b$ and $x_i\ra 1$ for $i\geq 4$. Thus,  $C_2\wr C_4$ is realizable over any $2$-adic field. It follows that for any prime $p$,
$G$ is realizable over $L_{v_1},L_{v_2}$ and hence is $L$-preadmissible.

(2) Let $v$ be a prime of $K$ dividing $p$. By local class field theory the Galois group $A_v:=\Gal(K_{v,ab,p}/K_v)$, where $K_{v,ab,p}$ is the maximal abelian pro-$p$ extension of $K_v$, is isomorphic to the pro-$p$ completion of the group $K_v^*$. Thus the rank $r_v$ of the maximal free abelian quotient of $A_v$ is $r_v=[K_v:\mQ_p]+1$ (see e.g. \cite[Chap. 14, \S 6]{Ser3}).
Let $p^{n_0}$ be larger then the exponent of the torsion part of $A_v$ for all primes $v$ of $K$ and $L$ that divide $p$. Then, for a prime $v$ of $K$ dividing $p$, the group $(C_{p^{n_0}})^N$ is realizable over $K_v$  if and only if  $N\leq r_v$. Let $v_2$ (resp. $w_2$) be a prime of $K$ (resp. $L$) dividing $p$ such that $[K_{v_2}:\mQ_p]$ (resp. $[L_{w_2}:\mQ_p]$)\ is second in the local degree type of $p$ in $K$ (resp. in $L$).

Assume on the contrary that $\frac{(D,\mH)_2}{|\mH|} > \frac{(D,\mH')_2}{|\mH'|}$.  By (\ref{degrees equality}), $[K_{v_2}:\mQ_p]=\frac{(D,\mH)_2}{|\mH|}$ and $[L_{w_2}:\mQ_p]=\frac{(D,\mH')_2}{|\mH'|}$. Thus, $r_{v_2}\geq [K_{v_2}:\mQ_p] +1\geq [L_{w_2}:\mQ_p]+2\geq 3$. The group $G=(C_{p^{n_0}})^{r_{v_2}}$ is therefore not metacyclic and hence realizable only over completions at primes dividing $p$. The above discussion shows that $G$ is realizable over two completions of $K$ but over at most one of $L$. Thus, $G$ is $K$-preadmissible  but not $L$-preadmissible, contradiction.
\end{proof}
Note that by  Theorem \ref{GW.thm}, for odd $p$ the group $G$ in the proof of Proposition \ref{preadmissibility_implications.prop}.(2) is in fact $K$-admissible.
\subsection{Equivalent subfields}\label{equ_subfield.sec}
Let $K$ and $L$ be number fields that are equivalent by preadmissibility and let $M$ be their $\mQ$-normal closure. Let us now assume further that  $L$ is Galois over $\mQ$. Then $L=M$ and $K$ is a subfield of $L$. As we shall now see, in this case Proposition \ref{preadmissibility_implications.prop} implies the second assertion of Theorem \ref{examples_characterization.thm} using Corollary \ref{equivalence_implies.cor} below.

For two subgroups $A,B$ of a finite group $\G$, denote by $S(A,B)$ the number of distinct split double cosets of $A,B$ in $\G$. By (\ref{deg2.equ}), $|AxB|=\frac{|A||B|}{|x^{-1}Ax\cap B|}$ and hence $AxB$ is a split double coset if and only if $x^{-1}Ax\cap B=1$.
\begin{cor}\label{equivalence_implies.cor} Let $\G$ be a finite group and $L/\mQ$ a $\G$-extension in which every rational prime decomposes. Let $K$ be a subfield of $L$ that is equivalent to $L$ by preadmissibility and let $\mH=\Gal(L/K)$.  Then $S(D,\mH)>1$ for any $D\leq \G$ that appears as a decomposition group.
\end{cor}
\begin{proof}  Let $p$ be a rational prime and $v_0$ a prime of $L$ dividing $p$. All primes $v$ of $L$ dividing $p$ have the same degree $[L_v:\mQ_p]=|D|$, where $D=D(L/\mQ,v_0)$. Since $p$ decomposes in $L$, and $K$ is equivalent by preadmissibility to $L$,
Proposition \ref{preadmissibility_implications.prop} implies that $p$ decomposes in $K$ and $\frac{(D,\mH)_2}{|\mH|}=|D|$. We deduce that $(D,\mH)_1=(D,\mH)_2=|D||\mH|$ and hence $S(D,\mH)>1$.
\end{proof}

If $\G$ is an $l$-group and $L/\mQ$ is a $\G$-extension in which $l$ splits completely then the ramification in $L/\mQ$ is tame and  all decomposition groups are metacyclic.
If in addition $\G$ is a non-metacyclic 
then every rational prime decomposes in $L$ and Corollary \ref{equivalence_implies.cor} applies. In particular Corollary \ref{equivalence_implies.cor} implies the second assertion of Theorem \ref{examples_characterization.thm} (the ``converse" part).

Note that  by the Chebotarev density theorem every cyclic subgroup appears as a decomposition group and hence Corollary \ref{equivalence_implies.cor} implies that $S(C,\mH)>1$ holds for every cyclic subgroup $C\leq \G$.

We shall now prove the first assertion of Theorem \ref{examples_characterization.thm} (the ``forward" part). The following proposition proves a part of this implication:
\begin{prop}\label{preadmissibility_under_extension.prop}
Let $l$ be a prime and $\G$ an $l$-group. Let $L/\mQ$ be a $\G$-extension in which $l$ splits completely and every rational prime decomposes. Let $K$ be a subfield of $L$.
Then every $K$-preadmissible group is also $L$-preadmissible.
\end{prop}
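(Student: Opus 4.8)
The plan is to transport the data witnessing the $K$-preadmissibility of $G$ up to $M$, using only that $M/K$ is an $l$-extension (since $\G$ is an $l$-group) together with the complete splitting of $l$ in $M$; the hypothesis that every rational prime decomposes in $M$ plays no role in this particular argument. So suppose $G$ is $K$-preadmissible, witnessed by primes and subgroups comprising, for each prime $p\divides|G|$, two distinct primes $v_1(p),v_2(p)$ of $K$ and subgroups $G^{v_i(p)}\le G$ as in the definition of preadmissibility. For each $p$ and each $i$ I would pick an arbitrary prime $\fp_i(p)$ of $M$ lying above $v_i(p)$. Since distinct primes of $K$ have disjoint sets of primes of $M$ above them, $\fp_1(p)\ne\fp_2(p)$, and $M_{\fp_i(p)}\supseteq K_{v_i(p)}$.

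The first step is a numerical remark: the local degree $[M_{\fp_i(p)}:K_{v_i(p)}]$ is prime to $p$. Indeed, since $M/K$ is Galois this degree is the order of a decomposition subgroup of $\Gal(M/K)=\mH\le\G$, hence divides $|\mH|$, a power of $l$; so it is prime to $p$ whenever $p\ne l$. And if $p=l$, then the complete splitting of $l$ in $M$ gives $M_{\fp_i(l)}=\mQ_l$, so, as $K_{v_i(l)}$ lies in between, $M_{\fp_i(l)}=K_{v_i(l)}=\mQ_l$ and the degree equals $1$; in either case it is prime to $p$.

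Now the transfer step. Fix $p$ and $i$, abbreviate $v=v_i(p)$, $\fp=\fp_i(p)$, and let $\phi\co G_{K_v}\twoheadrightarrow G^{v}$ be a continuous surjection from the absolute Galois group $G_{K_v}$ realizing $G^{v}$ over $K_v$. Because $K_v\subseteq M_{\fp}$, the absolute Galois group $G_{M_{\fp}}$ sits inside $G_{K_v}$ as an open subgroup of index $[M_{\fp}:K_v]$. Put $G^{\fp}:=\phi\bigl(G_{M_{\fp}}\bigr)\le G^{v}$. Then $\phi$ restricts to a surjection $G_{M_{\fp}}\twoheadrightarrow G^{\fp}$, so $G^{\fp}$ is realizable over $M_{\fp}$; and $[G^{v}:G^{\fp}]$ divides $[G_{K_v}:G_{M_{\fp}}]=[M_{\fp}:K_v]$, which by the first step is prime to $p$. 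Hence a Sylow $p$-subgroup of $G^{\fp}$ is a Sylow $p$-subgroup of $G^{v}$; and since $G^{v}$ contains a Sylow $p$-subgroup $P$ of $G$ — which has the full $p$-part of $|G|$ as its order, hence is also a Sylow $p$-subgroup of $G^{v}$ — the group $G^{\fp}$ contains a Sylow $p$-subgroup of $G$ as well.

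It then remains to observe that the primes $\{\fp_i(p)\}$ together with the subgroups $\{G^{\fp_i(p)}\}$ verify conditions (1)--(3) of $M$-preadmissibility: (1) is $\fp_1(p)\ne\fp_2(p)$, and (2), (3) are exactly what the transfer step supplies; therefore $G$ is $M$-preadmissible. I expect the only delicate point to be that $G^{v}$ itself need not remain realizable over the larger local field $M_{\fp}$, which forces one to replace it by the image subgroup $G^{\fp}=\phi(G_{M_{\fp}})$; the whole force of the argument is that this replacement costs only an index dividing $[M_{\fp}:K_v]$, and the hypotheses on $\G$ and on the splitting of $l$ are precisely what make that index prime to $p$, so the Sylow condition is preserved. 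Everything else is routine bookkeeping with decomposition groups.
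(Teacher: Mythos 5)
Your transfer step (restricting a surjection $G_{K_v}\ra G^{v}$ to the open subgroup $G_{M_{\fp}}$ and using that the index of the image divides $[M_{\fp}:K_{v}]$) is sound, and for $p\neq l$ it works for every witness prime $v_i(p)$, whether or not it divides $p$. The genuine gap is in the case $p=l$. The definition of $K$-preadmissibility does not force the witness primes $v_i(p)$ to lie above $p$, and your numerical remark silently assumes this: the assertion that complete splitting of $l$ gives $M_{\fp_i(l)}=\mQ_l$ is only valid when $v_i(l)$ divides $l$; complete splitting of $l$ says nothing about primes of $M$ away from $l$. If $v:=v_i(l)$ does not divide $l$ (perfectly possible, e.g.\ a tame realization of a group whose $l$-Sylow is metacyclic), then $[M_{\fp}:K_{v}]$ divides $|\mH|$ and is a power of $l=p$, so your index argument gives nothing: $G^{\fp}=\phi\bigl(G_{M_{\fp}}\bigr)$ can lose the entire $l$-part (already for $G=C_l$, with $G^{v}=C_l$ realized by the unramified degree-$l$ extension of $K_v$, which may well be contained in $M_{\fp}$, one gets $G^{\fp}=1$). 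Nor can you keep $G^{v}$ itself, since enlarging the residue field at a prime away from $l$ can destroy realizability (Liedahl-type constraints); pushing up along $\fp\mid v$ is simply not available in this case.

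The paper treats exactly this situation by a different move: if some $v_i(p)$ does not divide $p$, then by Liedahl's theorem the $p$-Sylow subgroup $G(p)$ is metacyclic, hence by Lemma \ref{section3- Lemma - from tame to wild} realizable over every $p$-adic field, and the witness primes are relocated to two distinct primes of $M$ lying above $p$, with $G^{w_i(p)}:=G(p)$. This relocation is also where the hypothesis that every rational prime decomposes in $M$ enters (for $p=l$ the complete splitting of $l$ already supplies at least two primes above $l$), so your opening claim that this hypothesis plays no role stems from the same oversight. In the cases where both witnesses divide $p$, your argument agrees in substance with the paper's (Remark \ref{section3- Remark - local extension of realizations} and the case $p=l$ there); the proof is repaired once you add the relocation step for $p=l$ when a witness prime is prime to $l$.
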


Note that the condition $S(D,\mH)>1$ for every subgroup $D\leq \G$ that appears as a decomposition group in $L/\mQ$ insures that every rational prime decomposes in $K$ and hence in $L$. Therefore, Proposition \ref{preadmissibility_under_extension.prop} shows that under the conditions of the first assertion of Theorem \ref{examples_characterization.thm}, any $K$-preadmissible group is also $L$-preadmissible. Our proof of Proposition \ref{preadmissibility_under_extension.prop} requires two lemmas.
\begin{lem}\label{section3- Remark - local extension of realizations} Let $G$ be a finite group. Let $L/K$ be an extension of $p$-adic fields such that $p\ndivides [L:K]$. Assume there is a subgroup  $G_1 \leq G$ that contains a $p$-Sylow subgroup of $G$ and is realizable
over $K$. Then there is a subgroup $G_2 \leq G_1$ that contains a $p$-Sylow subgroup of $G$ and is realizable over $L$.
\end{lem}
\begin{proof} Let $F/K$ be a $G_1$-extension. Let $G_2:=\Gal(F/F\cap L)\leq G_1\leq G$.
Then $G_2$ is isomorphic to $\Gal(FL/L)$ and hence realizable over $L$.  But as $p\ndivides [F\cap L:K]=[G_1:G_2]$, $G_2$ must also contain a $p$-Sylow subgroup of $G$.
\end{proof}

We shall use the following lemma to pass from tame realizations to wild realizations:
\begin{lem}\label{section3- Lemma - from tame to wild} Let $G$ be a metacyclic $p$-group and $k$ a $p$-adic field. Then $G$ is realizable over $k$.
\end{lem}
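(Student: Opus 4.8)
The plan is to realize $G$ as a Galois group over $k$ by building the extension in two layers corresponding to a metacyclic presentation of $G$. Write $G$ as an extension $1\to C\to G\to C'\to 1$ with $C=\langle a\rangle$ and $C'=\langle b\rangle$ both cyclic $p$-groups, say $|C|=p^s$ and $|C'|=p^t$; this is the definition of metacyclic. The idea is first to produce a cyclic extension of $k$ realizing the quotient $C'$, and then to solve the embedding problem obtained from the exact sequence by adjoining a suitable cyclic extension realizing $C$ on top, arranging the action to match the conjugation action of $b$ on $a$.

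First I would handle the easy building blocks. Since $k$ is a $p$-adic field, its absolute abelian Galois group surjects onto $\mathbb{Z}_p$ (indeed $k^*$ has a $\mathbb{Z}_p$-free quotient of rank $[k:\mathbb{Q}_p]+1\geq 2$ by local class field theory, exactly as used in the proof of Proposition 2.7(2)), so every cyclic $p$-group is realizable over $k$, and in fact over every finite extension of $k$; moreover one has enough independent $\mathbb{Z}_p$-quotients to realize $C_p\times C_p$ and hence to choose cyclic extensions "in general position". This lets me fix a cyclic extension $k_1/k$ with group $C'=\langle b\rangle$. The core of the argument is then the embedding problem
\[
1\longrightarrow C\longrightarrow G\longrightarrow \Gal(k_1/k)\longrightarrow 1,
\]
which I want to solve properly. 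Because $\Gal(\bar k/k_1)$ is a free pro-$p$ group (Serre, as cited in the excerpt, the maximal pro-$p$ group of a $p$-adic field is $\ldots$—here I only need that $\Gal(\bar k/k_1)^{\mathrm{ab}}$ is large enough, which again follows from local class field theory), I can find a cyclic extension $k_2/k_1$ of degree $p^s$ on which $\Gal(k_1/k)$ acts, after a twist, in the prescribed way; the point is that $\Gal(\bar k/k_1)$ admits $C$ as a quotient compatibly with any prescribed action of the (cyclic, hence well-understood) group $\Gal(k_1/k)$, because the obstruction lives in a cohomology group that vanishes or can be killed by enlarging $k_2$.

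The cleanest route, which I would actually carry out, is to invoke Shafarevich's theorem: every solvable group of order prime to the residue characteristic... no—here the order is a $p$-power and the residue characteristic is $p$, so I cannot use the tame/prime-to-$p$ version. Instead I would use the structure of $\Gal(\bar k/k)$: its maximal pro-$p$ quotient $G_k(p)$ is known explicitly (Demushkin, or the free case) and in particular it maps onto every metacyclic $p$-group because a metacyclic $p$-group is generated by two elements and $G_k(p)$ has $\geq 2$ generators, together with the fact that $G_k(p)$ is either free pro-$p$ or a one-relator (Demushkin) group whose single relation is flexible enough; a two-generated metacyclic group of $p$-power order is a quotient in either case. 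The main obstacle is precisely this: checking that the single Demushkin relation does not obstruct the surjection onto a given metacyclic $p$-group. I expect this to come down to the observation that the Demushkin relation, reduced modulo the Frattini subgroup and in the relevant low-degree part of the lower central/$p$-filtration, imposes no condition incompatible with the (explicitly presented, by Liedahl) metacyclic $p$-groups that occur here, so the surjection exists. Once such a surjection $G_k(p)\twoheadrightarrow G$ is produced, its fixed field is a Galois extension of $k$ with group $G$, proving the lemma.
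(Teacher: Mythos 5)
Your final route is the same as the paper's (use the known structure of $G_k(p)$, the Galois group of the maximal pro-$p$ extension of $k$: free pro-$p$ or Demushkin, together with the fact that a metacyclic $p$-group is $2$-generated), but you stop exactly at the step that constitutes the actual proof. Saying that the Demushkin relation ``imposes no condition incompatible'' with metacyclic $p$-groups, to be checked modulo the Frattini subgroup, is not an argument: to define a homomorphism out of a one-relator pro-$p$ group you must send the generators to elements of $G$ for which the relation maps \emph{exactly} to $1$, not merely to $1$ modulo Frattini (Frattini only gives you surjectivity once a homomorphism with generating image mod Frattini exists). The paper closes this gap by a concrete choice: since in the Demushkin case the rank is $n+2\geq 4$, one sends two generators that occur only inside commutators with \emph{distinct} partners (e.g.\ $x_2\mapsto f_1$, $x_4\mapsto f_2$, all other $x_i\mapsto 1$) so that the single relation $x_1^q[x_1,x_2]\cdots[x_{n+1},x_{n+2}]$ visibly maps to $1$; this yields an epimorphism of $G_k(p)$ onto the free pro-$p$ group $F_p(2)$ of rank $2$, and every metacyclic $p$-group is a quotient of $F_p(2)$. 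Your write-up contains no such choice, and your fallback (an embedding-problem construction in two cyclic layers, with the obstruction ``killed by enlarging $k_2$'') is likewise asserted rather than proved and is then abandoned.

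There is a second, more substantive inaccuracy: your blanket claim that a $2$-generated metacyclic $p$-group is a quotient of $G_k(p)$ ``in either case'' fails to isolate the genuinely exceptional case $k=\mathbb{Q}_2$. There $G_{\mathbb{Q}_2}(2)$ has only three generators and relation $x_1^2x_2^4[x_2,x_3]$, so the trick of killing the relation by sending spare generators to $1$ is unavailable (there are no spare generators, and one cannot map onto $F_2(2)$); it is not true that every $2$-generated pro-$2$ group is a quotient. The paper has to treat this case separately, citing the reference \cite{U} for the realizability of metacyclic $2$-groups over $\mathbb{Q}_2$, and it also checks the small-rank bookkeeping (that $q>2$ forces $n\geq 2$, and the free cases $q\leq 2$, including the Serre and Labute presentations for $p=2$, $n\geq 2$). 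Without the explicit generator-killing map and without a separate argument (or citation) for $\mathbb{Q}_2$, your proposal does not yet prove the lemma.
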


\begin{proof} Let $k\not=\mQ_2$ be a $p$-adic field, $n:=[k:\mQ_p]$ and $q$ the number of $p$-power roots of unity in $k$. At first assume $q>2$. Let $M_p(k)$ be the maximal pro-$p$ extension of $k$ and $\mG_k:=\Gal(M_p(k)/k)$. By \cite{Dem} (see also \cite[\S 2.5.6]{Ser2}), $\mG_k$ has a pro-$p$ presentation as in (\ref{first_presentation.equ}).
In such a case $n\geq 2$ and $\mG_k$ has an epimorphism onto the free pro-$p$ group $F_p(2)=\langle f_1,f_2\rangle$ on the $2$ generators $f_1,f_2$ which can be obtained simply by sending $x_2\ra f_1$, $x_4\ra f_2$ and $x_i\ra 1$ for every $i\not= 2,4$.  In particular, $G$ is realizable over $k$.

Now assume $q\leq 2$. If $p\not=2$,  then $q=1$ and by \cite{Shf} (see also \cite[\S 2.5.6]{Ser2}) $\mG_k$ is the free pro-$p$ group $F_p(n+1)$. Since $n\geq 1$, in this case as well $G$ is realizable over $k$.

Assume $p=q=2$ and $n\geq 2$,  then $\mG_k$ has one of the pro-$p$ presentations given by (\ref{f_1_presentation.equ}) and (\ref{f_2_presentation.equ}) if $n$ is even and (\ref{last_presentation.equ}) if $n$ is odd.
In the cases described in (\ref{f_1_presentation.equ}) and (\ref{f_2_presentation.equ}), $F_2(2)=\langle f_1,f_2\rangle$ is  an epimorphic image of $\mG_k$ by again sending $x_2\ra f_1,x_4\ra f_2$ and $x_i\ra 1$ for every $i\not=2,4$. If $n> 1$ is odd ($n\geq 3$) then $\mG_k$ has Presentation  (\ref{last_presentation.equ}) and hence admits an epimorphism onto  $F_2(2)=\langle f_1,f_2\rangle$ by sending $x_3\ra f_1,x_5\ra f_2$ and $x_i\ra 1$ for $i\not= 3,5$.

We are left with the case $p=2$, $k=\mQ_2$  which is covered in \cite{U}. \end{proof}

Let us turn back to prove Proposition \ref{preadmissibility_under_extension.prop}:
\begin{proof}%
Let $G$ be a  $K$-preadmissible group and $p\divides |G|$. There are two primes $v_1(p),v_2(p)$ of $K$ and corresponding subgroups $G^{v_1(p)},G^{v_2(p)}$ such that $G^{v_i(p)}$ is realizable over $K_{v_i(p)}$ and contains a $p$-Sylow subgroup of $G$, for $i=1,2$. For every $p$, we shall choose two primes $w_1(p),w_2(p)$ of $L$, and corresponding subgroups $G^{w_i(p)}\leq G$ such that:

(1) $G^{w_i(p)}$ contains a $p$-Sylow subgroup of $G$,

(2) $G^{w_i(p)}$ is realizable over $L_{w_i(p)}$,

(3) $w_1(p)\not=w_2(p)$ and $w_i(p)|p$,

 for all  $i=1,2$ and $p\divides |G|$. In particular,  $w_i(p)\not=w_j(q)$ for any $i,j\in\{1,2\}$ and $p\not=q$. Such a choice of primes and corresponding subgroups will show that $G$ is $L$-preadmissible.

Let $p\divides |G|$. If one of $v_i(p)$, $i=1,2$, does not divide $p$ then by Remark \ref{Lid_observation}, $G(p)$ is metacyclic and hence by Lemma \ref{section3- Lemma - from tame to wild}, realizable over any completion $L_w$ for any prime $w$ of $L$ that divides $p$. As $p$ decomposes in $L$,  we can choose both $w_1(p),w_2(p)$ to be any distinct primes of $L$ dividing $p$ and $G^{w_i(p)}:=G(p)$, $i=1,2$. So let us assume $v_1(p),v_2(p)|p$ and split our proof into two cases: $p=l$ and $p\not=l$.

Case $p=l$:  for every prime $w$ of $L$ dividing $l$ with restriction $v$ to $K$, we have $K_{v}\cong L_w\cong \mQ_l$.
In particular, if $l$ divides both $v_1(l),v_2(l)$ then $G^{v_i(l)}$ is realizable over $K_{v_i(l)}\cong L_{w_i(l)}$,  for any prime $w_i(l)$ of $L$ that divides $v_i(l)$, $i=1,2$. Thus by setting $G^{w_i(l)}:=G^{v_i(l)}$ for $i=1,2$ conditions (1)-(3) are satisfied for $p=l$.

Case $p\not= l$:
By Lemma \ref{section3- Remark - local extension of realizations}, for any $w_1(p)| v_1(p),w_2(p)| v_2(p)$ primes of $L$, there are two subgroups $G^{w_1(p)}\leq G^{v_1(p)},G^{w_2(p)}\leq G^{v_2(p)}$, each containing a $p$-Sylow subgroup of $G$, such that $G^{w_i(p)}$ is realizable over $L_{w_i(p)}$, for $i=1,2$.

The primes $w_i(p)$, and the corresponding subgroups $G^{w_i(p)}\leq G$ for $i=1,2$, $p\divides |G|$ were chosen so that conditions (1)-(3) hold and therefore $G$ is $L$-preadmissible.
\end{proof}

Lemma \ref{section3- Remark - local extension of realizations}  can also be used to extend admissibility from $K$ to $L$. However certain restrictions are required:

\begin{cor} Let $l$ be a prime and $\G$ an $l$-group. Let $L/\mQ$ be a $\G$-extension in which $l$ splits completely and $K$ a subfield of $L$. Then any group $G$ that is $K$-admissible and has no metacyclic Sylow subgroups is also $L$-admissible.
\end{cor}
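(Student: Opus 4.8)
The plan is to run the argument of Proposition~\ref{preadmissibility_under_extension.prop} while carrying along a genuine global extension of $M$, namely the compositum $N:=L\cdot M$ of $M$ with a suitable $G$-extension $L/K$. First I would fix a $G$-extension $L/K$ witnessing the $K$-admissibility of $G$; by Schacher's criterion (Theorem~\ref{Schachers criterion}), for every rational prime $p\mid|G|$ there are distinct primes $v_1(p),v_2(p)$ of $K$ for which $G^{v_i(p)}:=\Gal(L_{v_i(p)}/K_{v_i(p)})$ contains a $p$-Sylow subgroup $P_p$ of $G$; note each $G^{v_i(p)}$ is realizable over $K_{v_i(p)}$ by construction.

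The first thing to check is that every $v_i(p)$ divides $p$. An archimedean choice is impossible, since then $|G^{v_i(p)}|\le 2$, which cannot contain the non-cyclic group $P_p$. A finite prime $v\nmid p$ is also impossible: writing $P_p=\Gal(L_v/L_v^{P_p})$, the extension $L_v/L_v^{P_p}$ would be a $p$-extension of a local field of residue characteristic different from $p$, hence tamely ramified, which would make $\Gal(L_v/L_v^{P_p})=P_p$ cyclic-by-cyclic, contradicting the assumption that the Sylow subgroups of $G$ are non-metacyclic. This is the only place the hypothesis enters, and in fact it is only needed for the prime $l$.

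Next I would set $\mH:=\Gal(M/K)$, an $l$-group, and $N:=LM$. Restriction gives an isomorphism $\Gal(N/M)\cong\Gal(L/L\cap M)=:G_0\le G$, and $[G:G_0]=[L\cap M:K]$ divides $|\mH|$, hence is a power of $l$. The crucial step, which I expect to be the main obstacle, is to show $G_0=G$, i.e. that $L$ and $M$ are linearly disjoint over $K$, so that $N/M$ is actually a $G$-extension and not merely a proper sub-$G$-extension. If $l\nmid|G|$ this is automatic. Otherwise $v_1(l)$ divides $l$ by the previous step, and since $l$ splits completely in $M$ (hence in $K$) we have $M_w=\mQ_l=K_{v_1(l)}$ for any prime $w$ of $M$ over $v_1(l)$; as the completion of $N=LM$ at a prime $\mathfrak{W}\mid w$ is the compositum of completions and $M_w=K_{v_1(l)}\subseteq L_{v_1(l)}$, one gets $\Gal(N_{\mathfrak{W}}/M_w)\cong\Gal(L_{v_1(l)}/K_{v_1(l)})=G^{v_1(l)}$. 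Thus the decomposition group of $N/M$ at $\mathfrak{W}$, which lies in $\Gal(N/M)=G_0$, is isomorphic to $G^{v_1(l)}$ and hence contains an $l$-Sylow subgroup of $G$; since $[G:G_0]$ is an $l$-power, this forces $G_0=G$.

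Finally I would verify Schacher's criterion for $N/M$. For each $p\mid|G|$ choose primes $W_1(p),W_2(p)$ of $M$ lying over $v_1(p),v_2(p)$ respectively; these are distinct because $v_1(p)\ne v_2(p)$. For $p=l$ the computation above gives $\Gal(N_{W_i(l)}/M_{W_i(l)})\cong G^{v_i(l)}$, which contains an $l$-Sylow subgroup of $G$. For $p\ne l$, $G^{v_i(p)}$ is realizable over $K_{v_i(p)}$ and contains a $p$-Sylow subgroup of $G$, so Remark~\ref{section3- Remark - local extension of realizations} applies; taking the realization $L_{v_i(p)}/K_{v_i(p)}$ in the construction of that remark yields exactly $\Gal(L_{v_i(p)}M_{W_i(p)}/M_{W_i(p)})=\Gal(N_{\mathfrak{W}}/M_{W_i(p)})$, which therefore contains a $p$-Sylow subgroup of $G$. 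Hence for every $p\mid|G|$ the local Galois group of $N/M$ at two distinct primes contains a $p$-Sylow subgroup of $G$, so $G$ is $M$-admissible.
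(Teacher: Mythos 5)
Your proof is correct and follows essentially the same route as the paper: it passes to the compositum $LM/M$, uses the non-metacyclic Sylow hypothesis to force $v_i(p)\mid p$, and transfers the Sylow-containing local Galois groups up to $M$ exactly as in Remark \ref{section3- Remark - local extension of realizations}. The only difference is that you explicitly verify $L\cap M=K$ (so that $LM/M$ is genuinely a $G$-extension), a point the paper's proof leaves implicit.
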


\begin{proof} As $G$ is $K$-admissible there is a $G$-extension $F/K$ such that for every $p\divides |G|$ there are two primes $v_1(p),v_2(p)$ of $K$, the decomposition groups of which in $F/K$ contain a $p$-Sylow subgroup of $G$.

We claim $FL/L$ is a $G$-extension that satisfies Schacher's Criterion (Criterion \ref{schacher.cri}). The decomposition groups of $v_i(p)$ in $F/K$ are not metacyclic and hence $v_i(p)|p$ for all $p\divides |G|,i=1,2$. For every $i=1,2$, $p\divides |G|$, choose a prime $w_i(p)$ of $L$ that divides $v_i(p)$. By Lemma \ref{section3- Remark - local extension of realizations}, for $p\not=l$ that divides $|G|$ and $i=1,2,$ the decomposition groups of $w_i(p)$ in $FL/L$ contain a $p$-Sylow subgroup of $G$. When $p=l$, $K_{v_i}\cong L_{w_i}$ and hence the decomposition groups of $v_i$ in $F/K$ are the same as those of $w_i$ in $FL/L$, for $i=1,2$.

For all $p||G|$, we have a $p$-Sylow subgroup of $G$ that is contained in a decomposition group of $\Gal(FL/L)$ and hence in $\Gal(FL/L)$. In particular if $p^k||G|$ then $p^k||\Gal(FL/L)|$ and hence $|G|$ divides $|\Gal(FL/L)|$. Since $\Gal(FL/L)$ is isomorphic to a subgroup of $G$ we have $\Gal(FL/L)\cong G$. It follows that $FL/L$ is a $G$-extension and for every $p||G|$ there are two primes $w_1(p),w_2(p)$ of $L$ whose decomposition groups contain $p$-Sylow subgroups of $G$. This proves the claim and hence that $G$ is $L$-admissible.
\end{proof}

To prove Theorem \ref{examples_characterization.thm} we are left to prove:
\begin{prop}\label{L-pread_implies_K-pread.prop}
Let $l$ be a prime and $\G$ an $l$-group.
Let $L/\mQ$ be a $\G$-extension in which $l$ splits completely. Let $\mH\leq \G$ be a subgroup for which $S(D,\mH)>1$ for every subgroup $D\leq \G$ that appears as a decomposition group. Then any $L$-preadmissible group is also $K=L^\mH$ preadmissible.
\end{prop}
\begin{proof}
Let $G$ be an $L$-preadmissible group. For every $p\divides |G|$, there are two primes $w_1(p),w_2(p)$ of $L$ and corresponding subgroups $G^{w_1(p)},G^{w_2(p)},$ such that $G^{w_i(p)}$ is realizable over $L_{w_i(p)}$ and contains a $p$-Sylow subgroup of $G$.

Similarly to the proof of Proposition \ref{preadmissibility_under_extension.prop}, we show that the primes $w_i(p)$ can be chosen such that $w_i(p)|p$ for every $i=1,2$ and $p\divides |G|$.
If $w_i(p)$ does not divide $p$ by Remark~\ref{Lid_observation}, any $p$-Sylow subgroup $G(p)$ is metacyclic. In such a case, by Lemma \ref{section3- Lemma - from tame to wild}, $G(p)$ is realizable over $L_w$ for any prime $w$ that divides $p$. We replace every $w_i(p)$ that does not divide $p$ by a prime $w$ that divides $p$ and is different from $w_j(p)$, $j=1,2$, and set $G^w:=G(p)$. We obtain a set of primes $\{w_i(p)| i=1,2, p\divides |G|\}$ and corresponding subgroups $G^{w_i(p)}$ such that for every $i=1,2$ and $p\divides |G|$:

(1) $w_i(p)|p$,

(2) $G^{w_i(p)}$ is realizable over $L_{w_i(p)}$,

(3) $G^{w_i(p)}$ contains a $p$-Sylow subgroup of $G$.

Fix a rational prime $p\divides |G|$, a prime $w_0$ of $L$ dividing $p$ and set $D=D(L/\mQ,w_0)$. Since $L/\mQ$ is Galois, for any $i=1,2$, $G^{w_i(p)}$ is realizable over $L_{w_i(p)}$ and hence over $L_w$ for any prime $w$ of $L$ that divides $p$.  By the correspondence in Section \ref{basic.sec} the existence of two split double cosets in $(D,\mH)$ implies by (\ref{degrees equality}) that there are two primes $v_1(p),v_2(p)$ of $K$ for which $[K_{v_i(p)}:\mQ_p]=|D|$. Thus, there is a unique prime $w_i$ of $L$ that divides $v_i(p)$. For this prime we have $L_{w_i}\cong K_{v_i(p)}$. Therefore $G^{w_i(p)}$ is realizable over $K_{v_i(p)}$ for $i=1,2$ and $p\divides |G|$, which shows that $G$ is $K$-preadmissible.
\end{proof}

Theorem \ref{examples_characterization.thm} follows and we can now also prove Corollary \ref{group_criterion.cor}:

\begin{proof}
Let $S$ be the set of rational primes $p$ for which  $\mQ(e^{\frac{2\pi i}{p}})$ is contained in a $\G$-extension. Since such primes $p$ must satisfy $[\mQ(e^{\frac{2\pi i}{p}}):\mQ]=p-1\leq |\G|$, $S$ is finite.

By \cite{Sco} for odd $l$ (see also \cite[Chap. 2]{Ser92}) and \cite{Sha} for $l=2$ (see also \cite[Chap.~IX, \S 6]{NSW}), there are $\G$-extensions of $\mQ$ in which $l$ splits completely. Furthermore, in \cite[Chap. 2]{Ser92} and \cite[Chap. IX, \S 6]{NSW}, each ramified prime is chosen from an infinite set and hence there is a $\G$-extension $L/\mQ$ in which $l$-splits completely and the primes of $S$ are unramified.
Since any extension by roots of unity that is contained in a $\G$-extension must be ramified at some prime of $S$, the only roots of unity in $L$ are $\{1,-1\}$.

Let $K=L^\mH$. By Theorem \ref{examples_characterization.thm}, $K$ and $L$ are equivalent by preadmissibility. Since there are no odd order roots of unity in $K$ and $L$, we may apply Neukirch's Theorem (see \cite[Corollary 2]{Neu}) and deduce that every odd order group is $K$-preadmissible (resp. $L$-preadmissible) if and only if it is $K$-admissible (resp. $L$-admissible). But by Theorem~\ref{examples_characterization.thm}, $K$ and $L$ are equivalent by preadmissibility and hence every odd order group is $K$-admissible if and only if it is $L$-admissible.
\end{proof}
\section{Constructions}
\subsection{Sequences of $l$-groups}\label{seq.sec}

According to Corollary \ref{group_criterion.cor}, in order to construct preadmissibly equivalent fields with different degrees over $\mQ$, it suffices to find pairs of $l$-groups $\mH< \G$  that satisfy the condition: $S(D,\mH)>1$ for any metacyclic subgroup $D\leq \G$. We shall now provide a criterion on sequences of pairs $\mH< \G$ that guarantees that this condition is satisfied.

Fix a rational prime $l$. Let $(\G_n)_{n\in N}$ denote a sequence of $l$-groups such that $\G_n\leq \G_{n+1}$ and $\G_n\leq S_{l^n}$ for every $n\in \mathbb{N}$.  Let $\alpha$ be an element of order $l$ in  $\G_k$ for some $k\in\mathbb{N}$ and $\mH=\langle \alpha\rangle$.

\begin{prop}\label{series of groups} Let $d_n$ denote the maximal order of a metacyclic subgroup of $\G_n$, $n\in \mathbb{N}$. Assume the sequence $(\G_n)_{n\in\mathbb{N}}$ satisfies:

\emph{(1)} $\Lim_{n\ra \infty}\frac{|\G_n|}{d_n}=\infty$,

\emph{(2)} the element $\alpha$ has infinitely many conjugates in $\cup_{n\in\mathbb{N}}\G_n$.

Then there is an $N$ such that for every $n\geq N$ and every
metacyclic subgroup $D\leq \G_n$ we have $S(D,\mH)>1$.
\end{prop}

\begin{rem} Let $c_n$ denote the maximal order of an element in $\G_n$. Then $d_n\leq c_n^2$. Therefore the condition $\Lim_{n\ra\infty}\frac{|\G_n|}{c_n^2}=\infty$ suffices in order for $\G$ to satisfy $(1)$. Let $O_{\G_n}(\alpha)$ denote the orbit of $\alpha$ under conjugation in $\G_n$. Condition $(2)$ can also be stated as $\Lim_{n\ra \infty} |O_{\G_n}(\alpha)|=\infty$.
\end{rem}
It follows from Proposition \ref{series of groups} and Corollary \ref{group_criterion.cor} that:
\begin{cor}\label{examples.cor} Let $(\G_n)_{n=1}^\infty$ and $\mH$ be as in Proposition \ref{series of groups}. Then there is an $N$ such that for every $n\geq N$ there is a $\G_n$-extension $L/\mQ$ for which $K=L^\mH$ and $L$ are equivalent by preadmissibility and have the same odd order admissible groups.
\end{cor}
In order to prove Proposition \ref{series of groups} we first obtain a bound on the number of occurrences of a given cycle structure (a cycle structure is also often referred to as a partition) in an embedding of a metacyclic group in $S_n$.

Let $S_\infty$ be the group of all permutations of $\mathbb{N}$ that fix all but finitely many elements. Identify $S_n$ with the subgroup of $S_\infty$ that fixes all elements in $\mathbb{N}\setminus\{1,..,n\}$.
Note that $S_\infty$ can also be viewed as $S_\infty=\bigcup_{n\in\mathbb{N}}S_n$. Any element $\sigma\in S_\infty$ has a cycle structure $x=p(\sigma)$ which is a vector $(a_1,a_2,...)$ with $a_i\geq a_{i+1}$ such that $a_i=1$ for all $i\in \mathbb{N}$ but a finite number of $i$'s and for which $\sigma$ is a product of disjoint cycles $(\sigma_i)_{i\in \mathbb{N}}$ where $\sigma_i$ is an $a_i$-cycle for all $i\in \mathbb{N}$.
The order of $\sigma$ in $S_\infty$ is $\lcm_{i\in \mathbb{N}}(a_i)$ and hence depends only on the cycle structure $x$. We denote it by $o(x)$. 
Denote by $l(x)$ the length of $x$: $l(x):=\sum_{a_i\not=1}a_i$.

\begin{defn}\label{cyclicity_level.def} Let $G$ be a finite solvable group. Then there is a sequence $1=H_0\lhd H_1 \lhd \cdots \lhd H_k=G$ such that $H_i$ is normal in $H_{i+1}$ and $H_{i+1}/H_i$ is cyclic. The cyclicity level of $G$ is defined to be the minimal number $k$ for which such a sequence exists.
\end{defn}
\begin{lem}\label{existense of fx.prop}
Fix a number $k\in \mathbb{N}$ and some cycle structure $x$ in $S_\infty$. Then there is a number $b\in \mathbb{N}$ such that for every
solvable group $G$ of cyclicity level $k$ and every embedding $\phi:G\hookrightarrow S_\infty$ there are at most $b$ elements with cycle structure $x$ in $\phi(G)$.
\end{lem}
\begin{proof}
By induction on $k$. The case $k=0$ is trivial, for example take
$b=1$. Assume by induction that every group of cyclicity level
$<k$ has at most $e_y$ elements of cycle structure $y$ (in any
embedding). We fix $\phi$ and show that the number of elements of
cycle structure $x$ in $\phi(G)$ is bounded by a bound that
depends only on $k$ and $x$. We shall
identify $G$ with $\phi(G)$.

Let $H$ be a normal subgroup of $G$ of cyclicity level $k-1$ such that $C:=G/H$ is cyclic and let $\tau\in G$ be an element for which $\langle \tau H\rangle=C$. 
Let $u\in G$ be an element of cycle structure $x$. The order of the coset $uH$ in $C$ divides the
order of $u$ which is $o(x)$. As $C$ is cyclic it contains at most $o(x)$ elements of order
dividing $o(x)$ and hence there at most $o(x)$ cosets in $C$ that contain an element of cycle structure $x$.

It remains to bound the number of elements with cycle structure
$x$ in a given coset $uH$. Let $v$ be another element in $uH$ with
cycle structure $x$. The element $uv^{-1}$ is in $H$ and has
length $l(uv^{-1})\leq 2l(x)$. For every cycle structure $y$, with
length $l(y)\leq 2l(x)$ (clearly there are only finitely many
such) there are at most $e_y$ elements with cycle structure $y$ in
$H$ and hence $H$ contains at most $\sum_{\{y:l(y)\leq 2l(x)\}}
e_y$ elements with a cycle structure of length $\leq 2l(x)$. The
map $uH\ra H$ that sends $v\in uH$ to $u^{-1}v\in H$ is injective
and therefore the coset $uH$ contains at most $\sum_{\{y:l(y)\leq
2l(x)\}} e_y$ elements with cycle structure $x$. Summing over the
cosets of $C$ whose order divide $o(x)$ we get:
$$ |\{\sigma\in G|p(\sigma)=x\}|\leq b:=o(x)\sum_{\{y:l(y)\leq 2l(x)\}} e_y.$$
\end{proof}

For $k=2$, we have:
\begin{cor}\label{k-cyclicity and amount of a fixed type lemma}
Let $x$ be any cycle structure. There exists a number $b\in \mathbb{N}$
for which every metacyclic subgroup $D\hookrightarrow S_\infty$ contains at most $b$ elements with cycle structure $x$.
\end{cor}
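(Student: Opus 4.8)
The plan is to derive this corollary directly from Proposition~\ref{existense of fx.prop} by specializing to cyclicity level $k=2$ and then noting that $S_\infty$ can be used in place of an arbitrary $S_n$. Concretely, a metacyclic group is by definition an extension of a cyclic group by a cyclic group, hence it is polycyclic (in fact finite, so polycyclic and solvable coincide by the remark above) and admits a subnormal series $1\lhd C_1\lhd D$ with $C_1$ and $D/C_1$ cyclic; therefore its cyclicity level is at most $2$. A faithful embedding $D\hookrightarrow S_\infty$ is in particular a faithful representation in the sense of Proposition~\ref{existense of fx.prop}, so we may invoke that proposition with $k=2$.

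The key step is then simply to set $b:=b_x$, where $b_x$ is the constant produced by Proposition~\ref{existense of fx.prop} for the chosen cycle structure $x$ and for $k=2$. Since $b_x$ depends only on $x$ and on $k$, and $k=2$ is fixed, $b$ depends only on $x$ — exactly as claimed. Proposition~\ref{existense of fx.prop} then gives that any faithful $\phi:D\hookrightarrow S_\infty$ has at most $b_x$ elements with cycle structure $x$ in its image. One small point to spell out is that groups of cyclicity level $0$ or $1$ (the trivial group and cyclic groups) are also covered, since the bound in Proposition~\ref{existense of fx.prop} is stated for level exactly $k$, but a metacyclic group could happen to be cyclic; however, a cyclic group obviously has at most $o(x)\le b_x$ elements of any fixed order, so it is harmless to absorb these degenerate cases, or alternatively one observes that a group of cyclicity level $\le 2$ still fits the inductive bound since $e_y\le b_y$ at every intermediate level.

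There is no real obstacle here: this is a bookkeeping corollary whose entire content is the observation that ``metacyclic'' means ``cyclicity level $\le 2$''. The only thing worth being careful about is the quantifier order — one wants $b$ to be chosen after $x$ but before $D$ — and this is precisely how Proposition~\ref{existense of fx.prop} is phrased, so the corollary is immediate. I would therefore write the proof in two sentences: first, a metacyclic group has cyclicity level at most $2$; second, apply Proposition~\ref{existense of fx.prop} with this $k$ and take $b=b_x$.
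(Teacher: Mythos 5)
Your proposal is correct and matches the paper's own treatment: the corollary is stated there as the immediate specialization of Proposition~\ref{existense of fx.prop} to $k=2$, which is exactly your argument of taking $b=b_x$ for metacyclic (hence cyclicity level at most $2$) groups. Your extra remark about absorbing the degenerate cyclic case is a reasonable bit of care that the paper leaves implicit, but it does not change the route.
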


\begin{exam} The maximal number of transpositions in an abelian $2$-group of rank $r$ is $r$.
\end{exam}
\begin{exam}\label{transposition_metacyclic.exm} The maximal number of transpositions
in a metacyclic group is $4$. The subgroup $\langle (123)(45),(12)
\rangle$ of $S_5$ is a metacyclic group with $4$ transpositions,
namely $(12),(23),(13),(45)$. By following one step of the
induction in Lemma \ref{existense of fx.prop} and carefully counting the possible cycle structures for the elements  $u^{-1}v$, one can show
that for any $n$, there cannot occur $5$ transpositions in any
metacyclic subgroup of $S_n$.
\end{exam}
In general, given a cycle structure $x$ and some $n\in \mathbb{N}$, it seems an interesting but also a hard problem to find a good bound on the number of occurrences of $x$ in an embedding of a group of cyclicity level $n$.

We can now prove Proposition \ref{series of groups}:
\begin{proof}
For two subgroups $A,B\leq \G_r$ and $n\geq r$,  denote  $$X_n(A,B) := |\{x\in\G_n|A\cap B^{x}=1\}|,$$ where $B^x$ denotes $xB x^{-1}$. Since a double coset $AxB$  splits if and only if $A\cap B^x=1$, the number $X_n(A,B)$ is the number of elements of $\G_n$ that lie in split double cosets of $A,B$ in $\G_n$.

Recall that $\mH$ was defined to be a subgroup of $\G_k$. We shall show that there is an $N$ such that for every $n\geq N$ and every metacyclic subgroup $D$ of $\G_n$, $X_n(D,\mH)>|D||\mH|$.  This will prove that
there are at least two split double cosets in $\G_n$ for $n\geq N$.

Let $T$ denote the set of all $l$-cycles in $S_{l^n}$. Then for $D\leq \G_n$:
\begin{equation*}
X_n(D,\mH) = |\G_n| - |\{x\in \G_n|D\cap \mH^{x}\not= 1\}| = |\G_n| - |\{x\in \G_n |
\alpha^{x} \in D\}|=
\end{equation*}
\begin{equation*}
=|\G_n| - |\bigcup_{\sigma \in D\cap T }^{\cdot } \{ x | \alpha^x = \sigma \}|
= |\G_n| - \sum_{\tau\in T\cap D}|\{x | \alpha^{x} =
\tau \}|.\end{equation*}

 By Corollary \ref{k-cyclicity and amount of a
fixed type lemma} there is a number $b$ for which any metacyclic
subgroup of $S_\infty$  contains at most $b$ elements of cycle
structure $p(\alpha)$. Thus,
\begin{equation}\label{collapsing point}
X_n(D,\mH) \geq |\G_n| - b\cdot |N_{\G_n}(\alpha)|.
\end{equation}
But by conditions (1) and (2):
\begin{equation}\label{subcliam to insure a boundary}
\Lim_{n\ra\infty} \frac{|\mH|d_n+b\cdot |N_{\G_n}(\alpha)|}{|\G_n|} = \Lim_{n\ra\infty} \frac{|\mH|d_n}{|\G_n|}+\frac{b}{|O_{\G_n}(\alpha)|}=0.
\end{equation}
Therefore there is an $N$ for which $|\G_n|>|\mH|d_n+b\cdot |N_{\G_n}(\alpha)|$ holds for all $n\geq N$. Using Inequality \ref{collapsing point} we obtain:
$$ X_n(D,\mH) \geq |\G_n| - b\cdot |N_{\G_n}(\alpha)|> d_n|\mH|\geq |D||\mH|$$ for every $n\geq N$ and every metacyclic subgroup $D\leq \G_n$.
\end{proof}

\subsection{Sylow subgroups of the symmetric group}\label{sylow.sec}
We use Proposition \ref{series of groups} to construct explicit examples of pairs $(\G,\mH)$
that satisfy the conditions of Corollary \ref{group_criterion.cor}:

\begin{exam}\label{sylow subgroups.exam} Let $l$ be a prime  and $n\geq 2$.
Let $\alpha_1$ be the $l$-cycle $(1\, 2 \cdots l)$, $\alpha_2$ the
product of $l$ $l$-cycles:
$$\alpha_2:=(1\,\, l+1\, \, 2l+1 \cdots (l-1)l+1)(2\,\,l+2\cdots (l-1)l+2)\cdots(l\,\,2l\,\,3l\cdots l^2).$$
For $1\leq r\leq n$, define $\alpha_r$ to be the product of
$l^{r-1}$ $l$-cycles: $$\alpha_r:=(1\,\, l^{r-1}+1\cdots
(l-1)l^{r-1}+1)\cdots (l^{r-1}\,\,2l^{r-1}\cdots l^r).$$ The group
$\G_n:=\langle \alpha_1,\ldots,\alpha_n \rangle$ is a well known
example of an $l$-Sylow subgroup of $S_{l^n}$ (see \cite[\S 5.9]{Hal}). In particular $|\G_n|=l^{\frac{l^n-1}{l-1}}.$
Let $\mH=\langle \alpha_1\rangle$. We shall prove:
\end{exam}

\begin{prop}\label{sylow.prop} If $n\geq 3$, then $S(D,\mH)>1$ for every metacyclic subgroup $D\leq \G_n$.
\end{prop}
It follows from Corollary \ref{group_criterion.cor}, that:
\begin{cor}\label{sylow.cor} For $n\geq 3$,
there is a number field $L_n$ which is Galois over $\mQ$ with $\Gal(L_n/\mQ)\cong \G_n$ such that $L_n$ and $K_n=L_n^\mH$ are equivalent by preadmissibility. \end{cor}

Furthermore we shall prove that Proposition \ref{sylow.prop} and Corollary \ref{sylow.cor} hold for $n\geq 2$ and $l\geq 5$ or $n\geq 3$.
The smallest such example therefore appears when $l=2$ and $n=3$, i.e. $\G:=\G_n=S_8(2)$ which is of order $128$.
For $l=3$ and $n=2$, $\G_n=S_9(3)=\langle (123),(147)(258)(369) \rangle$, $\mH=\langle (123)\rangle$ and Proposition \ref{sylow.prop} does not hold since $S(D,\mH)=1$ for $D=\langle (123),(456)\rangle$. For $l=2$ and $n=2$, $\G_n=S_4(2)$ is the Quaternion group which is itself metacyclic and hence  $S(D,\mH)=0$ for  $D=\G_n$.

Before proving Proposition \ref{sylow.prop}, let us find the $l$-cycles in $\G_n$. The following $l^{n-1}$ $l$-cycles are conjugates of $\alpha_1$ in $\G_n$: $$\beta_1:=(1\cdots l),\beta_2:=(l+1\cdots 2l), ...,\beta_{l^{n-1}}:=(l^{n}-l+1\cdots l^n).$$ Let $T_0=\{\beta_i|1\leq i\leq l^{n-1}\}$.
\begin{lem} \begin{enumerate} \item Any $l$-cycle in $\G_n$ is of the form $\beta_i^j$ for some $i$ and $j$.
\item $T_0$ is the set of conjugates of $\alpha_1$ in $\G_n$.
\end{enumerate}
\end{lem}
\begin{proof} \begin{enumerate} \item Assume $\gamma\in \G_n$ is another $l$-cycle that is not of this form. Without loss of generality we can assume $\gamma=(b_1\,\,b_2\cdots b_l)$ where $b_1=1$. The subgroup $\langle \alpha_1,\gamma\rangle$ is contained in an $l$-Sylow subgroup of the symmetric group on symbols $\{1,...,l,b_2,...,b_l\}$. The latter is contained in $S_{2l-1}$. An $l$-Sylow subgroup of $S_{2l-1}$ is isomorphic to $C_l$ and hence any two $l$-cycles in such group are powers of each other. Thus $\gamma=\alpha_1^j=\beta_1^j$ for some $j$, contradiction.
 \item  The group $\overline{\G}_n:=\G_n/[\G_n,\G_n]$ is isomorphic to $C_l^n$ (see \cite[\S 5.9]{Hal} and  \\ \cite[Lemma~2.11]{KSN}). As all  $\beta_i$, $1\leq i\leq l^{n-1}$ are conjugates they are mapped under the natural map $\pi:\G_n\ra \overline{\G}_n$ to the same nontrivial element. This shows that for $1\leq i\leq l^{n-1}$ and $1\leq j\leq l-1$, $\pi(\beta_i^j)=\pi(\alpha_1)$ only if $j=1$. Thus, the only conjugates of $\alpha_1$ in $\G_n$ are the elements of $T_0$.
\end{enumerate}
\end{proof}
We can now prove Proposition \ref{sylow.prop}:
\begin{proof}
Fix an $n\geq 2$ and a metacyclic subgroup $D$ of $\G_n$.  We shall calculate $X_n(D,\mH) := |\{x|D\cap \mH^{x}=\{1\}\}|$.
Any metacyclic subgroup of $\G_n$ contains at most two elements of $T_0$ and hence:
\begin{equation*}
X_n(D,\mH) = |\G_n| - |\{x\in\G_n|D\cap \mH^{x}\not= 1\}| = |\G_n| - |\{x\in \G_n |
\alpha_1^{x} \in D\}|=
\end{equation*}
\begin{equation*}\label{collapsing point2}
 =|\G_n| - |
\bigcup_{\sigma \in D\cap T_0 }^{\cdot } \{ x | \alpha_1^x = \sigma \}
|= |\G_n| - \sum_{\sigma\in D\cap T_0}|\{x | \alpha_1^{x} =
\sigma \}| \geq |\G_n| - 2\cdot |N_{\G_n}(\alpha_1)|,
\end{equation*}
where $N_{\G_n}(\alpha_1)$ denotes the normalizer of $\alpha_1$ in $\G_n$. It is of order $N_{\G_n}(\alpha_1)=\frac{|\G_n|}{|O_{\G_n}(\alpha_1)|}$. As $T_0=O_{\G_n}(\alpha_1)$ is of cardinality $l^{n-1}$, we have $X_n(D,\mH)\geq |\G_n|(1-\frac{2}{l^{n-1}})$.

The maximal order of an element in $S_{l^n}(l)$ is $l^n$ and hence the cardinality of $D$ is at most $l^n\cdot l^n=l^{2n}$. So, $|D||\mH|\leq l^{2n}\cdot l=l^{2n+1}$. Thus, in order for $X_n(D,\mH)>|D||\mH|$ to hold it suffices that:
$$ |\G_n|(1-\frac{2}{l^{n-1}}) = l^{\frac{l^n-1}{l-1}}(1-\frac{2}{l^{n-1}})> l^{2n+1}.$$

This inequality holds whenever:

(1) $n=2,l\geq 5$,

(2) $n=3,l\geq 3$ or

(3) $n\geq 4$.

This covers all cases except for $l=2,n=3$. We shall now restrict to this case. Let
$$\G:=\G_3=S_8(2)=\langle (12),(13)(24), (15)(26)(37)(48)\rangle. $$
The transpositions in $\G$ are $T_0=\{(12),(34),(56),(78)\}$.

Assume on the contrary $D\leq \G$ is a metacyclic group for which $S(D,\mH)\leq 1$. Then $X_n(D,\mH)\leq |D||\mH|$. A metacyclic subgroup of $\G$ contains at most $2$ transpositions.
We shall split the proof into two cases according to whether $D$ contains two transpositions or at most one.

Assume $D$ contains at most one transposition. Then $$X_n(D,\mH)=\{x||x^{-1}\mH x\cap D|=1\}\geq \frac{3}{4}|\G|=3\cdot 2^5.$$ But, if $|D||\mH|\geq 3\cdot 2^5$ then $|D||\mH|=2^7$ which implies $D\mH=\G$ and $|D|=2^6$. Let us show that $\G$ has no metacyclic subgroup of order $2^6$. Let $\Phi=\G^2[\G,\G]$ be the Frattini subgroup of $\G$ and let $\pi: \G\ra \G/\Phi=C_2^3$ be the natural map.  If $D$ is metacyclic of order $2^6$ then it maps under $\pi$ onto a subgroup of some $C_2^2$. As $\pi^{-1}(C_2^2)$ contains at most $2^6$ elements we must have $D = \pi^{-1}(C_2^2)$. In such a case $D$ contains $[\G,\G]$ and a transposition and hence must also contain $T_0$, contradiction.

Assume now that $D$ contains two transpositions. Then $X_n(D,\mH)=2^6$ and hence $|D|=2^6$ or $|D|=2^5$. We have seen $|D|=2^6$ cannot occur. Let us show that there is no metacyclic subgroup of $\G$ of order $2^5$ which contains two transpositions. Assume without loss of generality $(12)\in D$. There are three cases: $D\supseteq \langle (12),(34)\rangle$, $D\supseteq \langle (12),(56)\rangle$ and $D\supseteq \langle (12),(78)\rangle$.

Case $D\supseteq \langle (12),(34)\rangle$: Let $\beta_i=(2i-1,2i), 1\leq i\leq 4,$ $\tau_1=(13)(24), \tau_2=(57)(68)$ and $u=\alpha_3=(15)(26)(37)(48)$. Then $$S_8(2)= (\langle \alpha_1 \rangle \wr \langle \alpha_2 \rangle) \wr \langle \alpha_3 \rangle = (\langle \beta_1,\beta_2\rangle\rtimes \tau_1) \times (\langle \beta_3,\beta_4\rangle\rtimes \tau_2)) \rtimes \langle u\rangle.$$ Thus, any element $x\in S_8(2)$ can be written uniquely in the form $$(\prod_{i=1}^4 \beta_i^{t_i(x)}) \tau_1^{s_1(x)}\tau_2^{s_2(x)}u^{w(x)}$$ for some $t_i(x),s_1(x),s_2(x),w(x)\in \{0,1\}$ and $i=1,..,4$.  If there is an element $x\in D$ that has $w(x)=1$  then $x^{-1}(12)x$ is a transposition that is not $(12)$ nor $(34)$ leading to a contradiction. Thus $D$ can be assumed to be a subgroup of $\G_0 = (\langle \beta_1,\beta_2\rangle\rtimes \tau_1) \times (\langle \beta_3,\beta_4\rangle\rtimes \tau_2)$. Let $\Phi_0$ be the Frattini subgroup of $\G_0$ and let $\pi_0:\G_0\ra \G_0/\Phi_0=C_2^4$ be the natural map. Then 
for any subgroup $C_2^2\cong U\leq \G_0/\Phi_0$ one has $|\pi_0^{-1}(U)|\leq 2^4$. Therefore there is no metacyclic subgroup of $\G_0$ of order $2^5$.

Case $D\supseteq \langle (12),(56)\rangle$: Clearly $D\subseteq N_{\G}(\langle (12),(56)\rangle)$ but $$N_{\G}(\langle (12),(56)\rangle)=\langle \beta_1,\beta_2,\beta_3,\beta_4,u\rangle$$ is of cardinality $2^5$. Thus, if $|D|=2^5$ then $D=N_{\G}(\langle (12),(56)\rangle)$ which cannot occur since $D$ would then contain all transpositions.

Case $D\supseteq \langle (12),(78)\rangle$:  $D\subseteq N_{\G}(\langle (12),(78)\rangle)$ but $$N_{\G}(\langle (12),(78)\rangle)=\langle \beta_1,\beta_2,\beta_3,\beta_4,\tau_1\tau_2u\rangle$$ is of cardinality $2^5$. Thus if $|D|=2^5$,  $D=N_{\G}(\langle (12),(78)\rangle)$ which again cannot occur.
\end{proof}

\section{Arithmetic equivalences}\label{art.sec}
In this section we recall some characterizations of arithmetic
equivalence and local isomorphism and use them to show the
implications of Diagram (\ref{section1.1 - implications Diagram})
and prove that any other implication that holds is a composition of these implications. 
To prove the latter, we show that any implication which is not a composition of the implications of the diagram fails to hold. For this, it suffices to give examples for the non-implications: $2\not\ra 1$,
$3\not\ra 4$ and $4\not\ra 3$. The non-implication  $3\not\ra 4$
appears in Example \ref{3 not implies 4.exam}, $2\not\ra 1$
appears in \cite{Kom2} and in Example \ref{2 not implies 1.exam},
and $4\not\ra 3$ follows from Remark~\ref{4 not implies 3.exam}.

\subsection{Arithmetic equivalence}\label{art.subsec}
By \cite[Chap. 3, Theorem 1.4]{Kl}, two arithmetically equivalent number fields
  have the same $\mQ$-normal closure.
Let us therefore assume $K$ and $L$ are number fields with the same $\mQ$-normal closure
$M$ and denote $\G=\Gal(M/\mQ),\mH=\Gal(M/K)$ and $\mH'=\Gal(M/L)$.

Let $p$ be a rational prime and $v_1,\ldots,v_r$ the primes of $K$ dividing it, ordered by decreasing inertial degrees $f_i=f(v_i|p)$, $i=1,\ldots, r$. The {\it splitting type} of $p$ in $K$ is the vector $(f_1,\ldots,f_r)$.

The fields $K$ and $L$ are
arithmetically equivalent if and only if all rational primes have
the same splitting type in $K$ and $L$ (see \cite[Chap. 3, \S 1]{Kl}).
It turns out by a similar correspondence to that in Section
\ref{basic.sec} (see \cite[\S 1]{Per}) that all rational
primes have the same splitting type in $K$ and $L$ if and only if
the coset types $(C,\mH)$ and $(C,\mH')$ are the same for any
cyclic subgroup $C\leq \G$. If $\mH$ and $\mH'$ satisfy the latter
they are said to be {\it Gassmann equivalent}. It follows that if
$K$ and $L$ are arithmetically equivalent then $\mH$ and $\mH'$
are Gassmann equivalent and hence $|\mH|=|\mH'|$ and
$[K:\mQ]=[L:\mQ]$.

\begin{rem}\label{4 not implies 3.exam} The examples of Section \ref{sylow.sec} therefore show that two number fields
which are equivalent by preadmissibility (or by admissibility of
odd order groups)  can have different degrees over $\mQ$ and hence
need not be arithmetically equivalent.
\end{rem}

Note that Gassmann equivalence has another well known
characterization, namely $\mH$ and $\mH'$ are Gassmann equivalent
if and only if for any $g\in \G$:
\begin{equation*} |g^\G\cap \mH|=|g^\G\cap \mH'|, \end{equation*}
where $g^\G$ denotes the conjugacy class of $g$ in $\G$.

\subsection{Local isomorphism}

For a number field $F$, let $P(F)$ denote the set of primes of
$F$. By \cite{Iwa}, Lemma $7$, two number fields $K$ and $L$ are locally isomorphic
if and only if there is a bijection $\phi:P(K)\ra P(L)$ such that
$K_{v}\cong L_{\phi(v)}$ for every $v\in P(K)$.

It follows that two locally isomorphic number fields $K$ and $L$ (with a map $\phi$ as above) are also arithmetically
equivalent (since every $p$ has the same inertial degree in $K_v$
and $L_{\phi(v)}$) and equivalent by preadmissibility (since a
group is realizable over $K_v$ if and only if it is realizable
over $L_{\phi(v)}$). This shows the remaining implications in
Diagram (\ref{section1.1 - implications Diagram}).

In \cite{Kom2}, Komatsu gave an example of two locally isomorphic number
fields, given explicitly as radical extensions of $\mQ$, that are
not isomorphic (showing the non-implication $2\not\ra 1$).
In fact, a complete classification of locally isomorphic radical
extensions appears in \cite{JV}.

The following is a simple construction, using a different approach
from \cite{Kom2} and \cite{JV}, that assigns to every two Gassmann
equivalent subgroups of the symmetric group, two locally
isomorphic number fields. 

\begin{exam}\label{2 not implies 1.exam}
Let $M/\mQ$ be a Galois extension of number fields and $T/M$ an unramified $S_n$-extension which is Galois defined over $\mQ$, i.e. there is an $S_n$-extension $F/\mQ$ for which $T=MF$ (for such a construction see \cite{Fro}).

Let $\mH$ and $\mH'$ be two Gassmann equivalent subgroups of $S_n$
that are not conjugate in $S_n$. A method to construct such pairs
$\mH$ and $\mH'$  is given in \cite[\S 3]{Per}.

Let $\G:=\Gal(T/\mQ)\cong \Gal(T/F)\times \Gal(T/M)\cong \Gal(T/F)\times S_n$. Let us view $\mH$ identify $\mH'$ as subgroups of the latter $S_n$ and let $K=T^{\mH}$ and $L=T^{\mH'}$.
\begin{prop}\label{swallow_ram.prop}
The fields $K$ and $L$ are locally isomorphic but $K\not\cong L$.
\end{prop}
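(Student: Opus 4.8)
The plan is to show that $K$ and $L$ are locally isomorphic by exhibiting an explicit bijection $\phi\colon P(K)\to P(L)$ with $K_\fp\cong L_{\phi(\fp)}$, which by \cite{Iwa}, Lemma $7$, suffices; the non-isomorphism $K\not\cong L$ will follow because $\mH$ and $\mH'$ are not conjugate in $S_n$. The key observation is that since $T/M$ is \emph{unramified}, all ramification in $K/\mQ$ and $L/\mQ$ comes from the factor $\Gal(T/F)$, while the Gassmann-equivalence of $\mH$ and $\mH'$ governs the decomposition behavior coming from the $S_n$ factor. So I would first fix a rational prime $p$, a prime $\fP$ of $T$ above it, and let $D=D(T/\mQ,\fP)\leq \G\cong \Gal(T/F)\times S_n$ with inertia subgroup $I\leq D$; since $T/M$ is unramified, $I$ is contained in the first factor $\Gal(T/F)$.

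The second step is to compare the two local degree types using the double-coset description from Lemma \ref{double_coset_prime_splt_cor.lem}: the primes of $K$ above $p$ correspond to double cosets $Dx\mH$, with $[K_\fp:\mQ_p] = |Dx\mH|/|\mH|$, and likewise for $L$ with $\mH'$. I want to produce, for each double coset $Dx\mH$, a double coset $Dy\mH'$ with the \emph{same} cardinality \emph{and} such that the corresponding completions are isomorphic local fields — not merely of equal degree. Here I would use that $D$ is metacyclic is \emph{not} needed; instead, I project to the $S_n$ factor: writing $D_2$ for the image of $D$ in $S_n$ and $D_1=D\cap\Gal(T/F)$, the orbit structure of $D$ on $\G/\mH$ refines according to the orbit structure of $D_2$ on $S_n/\mH$, and Gassmann equivalence of $\mH,\mH'$ in $S_n$ gives (via Gassmann's theorem, since it equates $(C,\mH)$ and $(C,\mH')$ for all cyclic $C$, and more generally equates the permutation characters of $S_n/\mH$ and $S_n/\mH'$) a bijection between the $D_2$-orbits on $S_n/\mH$ and on $S_n/\mH'$ preserving orbit sizes. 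Combining this with the (common) behavior in the $\Gal(T/F)$ direction, and noting that the completion $K_\fp$ is determined by the pair (the local field $T_\fP$, the decomposition group $D$, and the subgroup $x\mH x^{-1}\cap D$ up to the relevant conjugacy) — and that this data matches on the $L$ side — yields the isomorphism $K_\fp\cong L_{\phi(\fp)}$ for the matched primes. Since the construction is uniform over all $p$ (including ramified ones, because ramification lives entirely in the unramified-over-$M$ complement and is therefore "the same" for both sides), this defines the desired global bijection $\phi$.

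For $K\not\cong L$: an isomorphism $K\cong L$ of number fields would extend to an automorphism of the common normal closure $T$ (both have normal closure $T$ since $\core_{S_n}(\mH)=\core_{S_n}(\mH')=1$ forces $\Core_\G(\mH)=\Core_\G(\mH')=1$, using that $\Gal(T/F)$ acts trivially on $\G/\mH$ on the relevant factor — more precisely the core of $\mH$ in $\G$ equals the core in the $S_n$ factor), and such an automorphism would conjugate $\mH$ to $\mH'$ inside $\G$; projecting to $S_n$ this would conjugate $\mH$ to $\mH'$ in $S_n$, contradicting the assumption. The main obstacle I anticipate is the precise bookkeeping in Step two: one must check that the Gassmann-equivalence datum really does lift to a \emph{local-field} isomorphism and not merely a local-degree match, i.e. that the inertia and residue data attached to each matched pair of primes coincide; this requires carefully separating the $\Gal(T/F)$-part (which carries the ramification and is literally shared) from the $S_n$-part (which is "combinatorial" and controlled by Gassmann equivalence), and verifying that the two parts interact compatibly via the decomposition group $D$ sitting inside the product $\Gal(T/F)\times S_n$.
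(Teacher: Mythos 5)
Your second half (the non-isomorphism $K\not\cong L$) is fine and is essentially the paper's argument: a conjugation in $\G=\Gal(T/F)\times S_n$ carrying $\mH$ to $\mH'$ would, upon projecting to the $S_n$ factor, conjugate $\mH$ to $\mH'$ inside $S_n$. The local-isomorphism half, however, has a genuine gap. You work over $\mQ$ with the full decomposition group $D$ and invoke Gassmann equivalence to match the orbits of its image $D_2$ in the $S_n$ factor on $S_n/\mH$ and on $S_n/\mH'$ with equal sizes. But Gassmann equivalence only equates double coset (orbit) types for \emph{cyclic} subgroups, and $D_2$ is the decomposition group of the prime of $F$ below your chosen prime of $T$, which is generally non-cyclic at primes ramified in $F$ (and $F/\mQ$ is ramified somewhere). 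Equality of orbit types for non-cyclic subgroups is precisely what Gassmann equivalence fails to provide; this is the very reason arithmetic equivalence does not imply local isomorphism, as in Example \ref{3 not implies 4.exam}. Two further steps are also unjustified: from $T/M$ unramified you only get $I\cap\Gal(T/M)=1$, not $I\subseteq\Gal(T/F)$ (a subgroup of a direct product meeting one factor trivially need not lie in the other factor); and even after matching degrees you still owe an argument that the matched completions are isomorphic as local fields, which you only gesture at.

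The missing idea, and the paper's route, is to compare $K$ and $L$ over $M$ rather than over $\mQ$: since $\mH,\mH'\leq\Gal(T/M)$, both $K$ and $L$ contain $M$. Fix a prime $v$ of $M$; its decomposition group $C$ in $T/M$ is cyclic because $T/M$ is unramified, so Gassmann equivalence applies verbatim to give $(C,\mH)=(C,\mH')$, hence a bijection between the primes of $K$ and of $L$ above $v$ preserving the degrees over $M_v$. Since every such completion is an unramified extension of the \emph{same} local field $M_v$, and $M_v$ has a unique unramified extension of each degree, matched primes automatically have isomorphic completions. This removes all the delicate bookkeeping with $D$, its projections, and inertia that your sketch identifies as the main obstacle; collecting these bijections over all $v$ and applying Iwasawa's criterion gives the local isomorphism. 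I recommend reworking the first half along these lines.
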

\end{exam}
\begin{proof}
As $\mH$ and $\mH'$ were chosen to be non-conjugate in $S_n$ and
as $\Gal(T/F)$ commutes with $S_n$ in $\G$, $\mH$ and
$\mH'$ are not conjugate in $\G$ and hence $K\not\cong
L$.

Let us prove that $K$ and $L$ are locally isomorphic. As $T/M$ is
unramified all primes of $M$ have cyclic decomposition groups. Let
$v$ be a prime of $M$ and let $C$ be a cyclic subgroup of $S_n$ such
that the  decomposition group of $v$ in $T/M$ is (the conjugacy
class of) $C$.  By the correspondence in Section \ref{basic.sec},
there is a bijection between the primes $v_1,\ldots,v_r$ (resp.
$w_1,\ldots,w_s$) of $K$ (resp. of $L$) that divide $v$ and the
double cosets $Cx_1\mH,\ldots,Cx_r\mH$ (resp.
$Cy_1\mH',\ldots,Cy_s\mH'$) such that
$[K_{v_i}:M_v]=\frac{|Cx_i\mH|}{|\mH|}$ (resp.
$[L_{w_i}:M_v]=\frac{|Cy_i\mH'|}{|\mH'|}$). As $\mH$ and $\mH'$
are Gassmann equivalent in $S_n$  the coset type $(C,\mH)$ is the
same as the coset type $(C,\mH')$. Thus $r=s$, $|\mH|=|\mH'|$ and
one has:
\begin{equation*}
d_i:=[K_{v_i}:M_v]=\frac{|Cx_i\mH|}{|\mH|}=\frac{|Cy_i\mH'|}{|\mH'|}=[L_{w_i}:M_v]
\end{equation*} for $i=1,2,\ldots,r$.
But as $T/M$ is unramified and $M_v$ has a unique unramified
extension of degree $d_i$, one has $K_{v_i}\cong L_{w_i}$, for
$i=1,\ldots, r$. This establishes a bijection $\phi$ for any prime
$v$ of $M$ between the prime divisors of $v$ in $K$ and those in
$L$ such that $K_{v_K}\cong L_{\phi(v_K)}$ for any prime $v_K$ of $K$
dividing $v$. We therefore obtain a bijection $\phi:P(K)\ra P(L)$
such that $K_{v_K}\cong L_{\phi(v_K)}$ for any prime $v_K$ of $K$.
Thus, $K$ and $L$ are locally isomorphic.
\end{proof}
Given a $\G$-extension $F/\mQ$, the process of creating an extension $M/\mQ$
for which $MF/M$ is an unramified $\G$-extension is often called swallowing ramification or Abhyankar's Lemma.

\subsection{Preadmissibility and arithmetic equivalence}

To show $3\not\ra 4$, we use an example from \cite{Kom1} of two arithmetically equivalent
fields $K$ and $L$ that are not locally isomorphic and show that these $K$ and $L$ are in fact
inequivalent by preadmissibility.

\begin{exam}\label{3 not implies 4.exam} Let $K=\mQ(\sqrt[32]{m})$ and $L=\mQ(\sqrt{2}\sqrt[32]{m})$ where $m\not=\pm 1,\pm 2$ is a square free integer that satisfies $m\equiv 1$ (mod $2^7$). In \cite[Lemma 4]{Kom1}, Komatsu shows that $K$ and $L$ are arithmetically equivalent. Let us show that $K$ and $L$ are not equivalent by preadmissibility and nor by admissibility. Since $m\equiv 1$ (mod $2^7$) there is a unit $u\in \mathbb{Z}_2$ for which $u^{32}=m$. So, the polynomials that define $K$ and $L$ factor over $\mQ_2$ into irreducible factors as follows:
$$ x^{32}-m= (x-u)(x+u)(x^2+u^2)(x^4+u^4)(x^8+u^8)(x^{16}+u^{16})$$
$$ x^{32}-2^{16}m= (x^2-2u)(x^2+2u)(x^2-2ux+2u)(x^2+2ux+2u)(x^8+16u^8)(x^{16}+2^8u^{16}). $$
Note that by \cite[Lemma 10]{Kom1} all the above factors are irreducible in $\mQ_2[x]$.

Therefore, there are $6$ primes $v_1,\ldots,v_6$ in $K$ and $6$ primes $w_1,\ldots,w_6$ in $L$ that divide $2$. Let us
assume the primes are ordered so that $[K_{v_i}:\mQ_2]\geq
[K_{v_{i+1}}:\mQ_2]$ and \\ $[L_{w_i}:\mQ_2]\geq [L_{w_{i+1}}:\mQ_2]$
for  $i=1,\ldots,5$. Considering the above factorizations we have:
$K_{v_1}=\mQ_2(\mu_{32})$, $K_{v_2}=\mQ_2(\mu_{16})$,
$L_{w_1}=\mQ_2(\sqrt[16]{-2})$, $L_{w_2}=\mQ_2(\sqrt[8]{-2})$ and
$[K_{v_i}:\mQ_2]\leq 4$, $[L_{w_i}:\mQ_2]\leq 4$ for $i\geq 3$.

Let $A:=C_{16}^{10}$. By local class field theory the maximal
abelian extension $k_{ab}$ of a $p$-adic field $k$ has Galois
group $\Gal(k_{ab}/k)$ that is isomorphic to the profinite
completion of the group $k^*$ (see e.g. \cite[Chap. 14, \S 6]{Ser3}). Therefore the maximal abelian
extension $K_{v_2,ab,16}$ of exponent $16$ of $K_{v_2}$ has Galois
group $\Gal(K_{v_2,ab,16}/K_{v_2})\cong C_{16}^{10}=A$. Similarly,
$\Gal(K_{v_1,ab,16}/K_{v_1})\cong C_{16}^{18}$,
$\Gal(L_{w_1,ab,16}/L_{w_1})\cong C_{16}^{17}\times C_2$,
$\Gal(L_{w_2,ab,16}/L_{w_2})\cong C_{16}^{9}\times C_2$ and
$\emph{rk}(\Gal(K_{v_i,ab,16}/K_{v_i})),\emph{rk}(\Gal(L_{w_i,ab,16}/L_{w_i}))\leq
6$ for $i\geq 3$. We can now see that $A$ is realizable over two
completions of $K$ and only one completion of $L$. Thus, $A$ is
$K$-preadmissible but not $L$-preadmissible and $K$ and $L$ are
inequivalent by preadmissibility. It also follows that $A$ is
not $L$-admissible but by \cite{Nef} $A$ is
$K$-admissible (as the primes $v_1,v_2$ are evenly even). Thus,
$K$ and $L$ are  inequivalent by admissibility.
\end{exam}

\end{document}